\theoremstyle{plain}
\newtheorem{thm}{Theorem}
\newtheorem{lem}{Lemma}
\theoremstyle{definition}
\newtheorem{ex}{Example}
\newtheorem{prop}{Proposition}
\newtheorem{cor}{Corollary}
\title{Determinant of the OU matrix of a braid diagram}
\author{Ayaka Shimizu\thanks{Osaka Central Advanced Mathematical Institute, Osaka Metropolitan University, Sugimoto, Osaka, 558-8585, Japan. Email: shimizu1984@gmail.com}
and Yoshiro Yaguchi\thanks{Maebashi Institute of Technology, Maebashi, Gunma, 371-0816, Japan. Email: y.yaguchi@maebashi-it.ac.jp}}
\date{\today}
\begin{document}

\maketitle

\begin{abstract}
In this paper, we define the OU matrix of a braid diagram and discuss how the OU matrix reflects the warping degree or the layeredness of the braid diagram, and show that the determinant of the OU matrix of a layered braid diagram is the product of the determinants of the layers. 
We also introduce invariants of positive braids which are derived from the OU matrix. 
\end{abstract}

\section{Introduction}
\label{section-intro}

A {\it braid} is a disjoint union of strands attached to two parallel horizontal bars in $\mathbb{R}^3$, where each strand runs from one bar to the other bar without returning. 
We say a braid is an {\it $n$-braid} when the braid is consisting of $n$ strands. 
A {\it braid diagram} is a diagram of a braid depicted on $\mathbb{R}^2$. 
Each braid diagram is represented by a word consisting of $\sigma_i$ and ${\sigma_i}^{-1}$, where $\sigma_i$ (resp. ${\sigma_i}^{-1}$) stands for a positive (resp. negative) crossing, as shown in Figure \ref{fig-braid}.
\begin{figure}[ht]
\centering
\includegraphics[width=6cm]{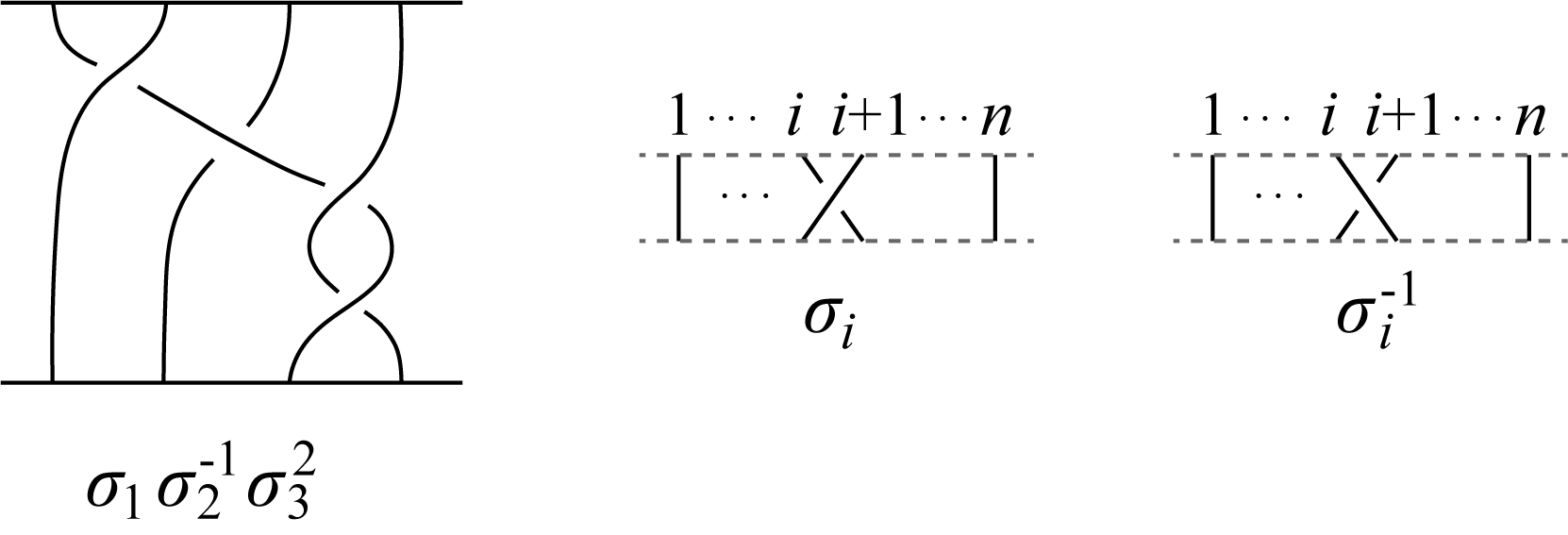}
\caption{A braid diagram. }
\label{fig-braid}
\end{figure}

The warping degree has been defined for link diagrams and studied to measure a complexity of link diagrams in a sense of over/under information (see, for example, \cite{K-l, K-w, ASl}). 
Considering the order of strands, the warping degree of a braid diagram\footnote{In \cite{NSP}, a labeling named ``warping labeling'' was also defined for twisted virtual braid diagrams from a different point of view of the warping degree, following a rule of the warping degree of the closure.} was also defined recently in \cite{ASA} as follows.
Let $s_1 , s_2 , \dots , s_n$ be the strands of an $n$-braid diagram $B$ which are positioned in the order from the left-hand side to the right-hand side at the top of the braid diagram, as shown in Figure \ref{fig-5}. 
The {\it warping degree of $(s_i , s_j)$}, denoted by $\mathrm{wd}(s_i , s_j)$, is the number of crossings such that $s_i$ is under $s_j$. 
Let $\mathbf{s}$ be a sequence of the strands in some order, and we denote the pair of $B$ and $\mathbf{s}$ by $B_{\mathbf{s}}$. 
The {\it warping degree of $B_{\mathbf{s}}$}, denoted by $\mathrm{wd}(B_{\mathbf{s}})$, is the total number of crossings such that $s_i$ is under $s_j$ where $s_i$ is positioned ahead of $s_j$ in the sequence $\mathbf{s}$. 
The {\it warping degree of a braid diagram $B$}, denoted by $\mathrm{wd}(B)$, is the minimum value of $\mathrm{wd}(B_{\mathbf{s}})$ for all $\mathbf{s}$. 

\begin{ex}
The braid diagram $B$ in Figure \ref{fig-5} has $\mathrm{wd}(s_1 , s_2)=1$, $\mathrm{wd}(s_2 , s_1)=0$. 
For the sequences of strands $\mathbf{s}=(s_1 , s_2 , s_3 , s_4 , s_5 )$ and $\mathbf{s}'=(s_2 , s_3 , s_1 , s_4 , s_5 )$, $B$ has $\mathrm{wd}(B_{\mathbf{s}})=3$ and $\mathrm{wd}(B_{\mathbf{s}'})=1$. 
We can determine the warping degree to be $\mathrm{wd}(B)=1$ by checking the warping degrees for all the $5!$ orders or applying Theorem \ref{thm-det-wd}.
\begin{figure}[ht]
\centering
\includegraphics[width=5cm]{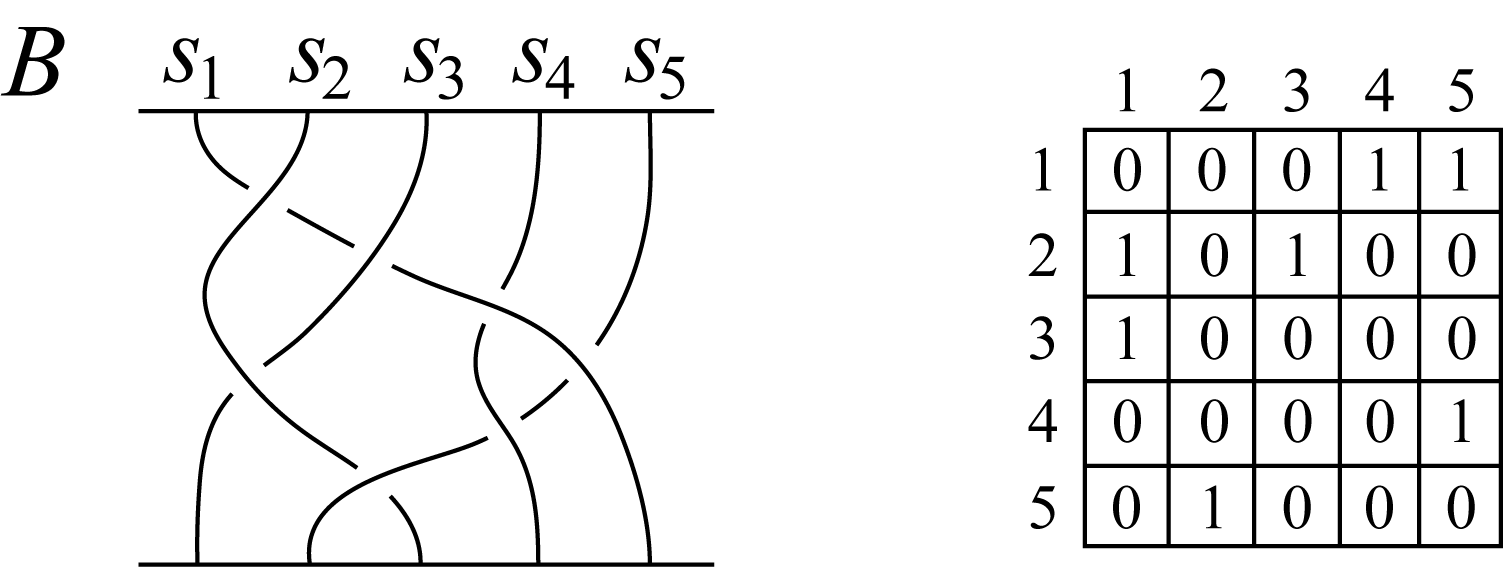}
\caption{A braid diagram $B$ and its OU matrix with $\mathbf{s} = (s_1, s_2 , s_3 , s_4 , s_5)$. }
\label{fig-5}
\end{figure}
\label{ex-5}
\end{ex}

When the set of strands $S= \{ s_1, s_2, \dots , s_n \}$ of an $n$-braid diagram $B$ can be divided as the disjoint union $S_1 \amalg S_2 \amalg \dots \amalg S_k$ of $k$ non-empty sets $S_1 , S_2 , \dots , S_k$ for some integer $k \geq 2$ so that $\mathrm{wd}(s_i, s_j)=0$ for any $s_i \in S_l$ and $s_j \in S_m$ for $l < m$, we say $B$ is {\it layered}. 
For each $i \in \{ 1, 2, \dots , k \}$, we assume the union of strands in $B$ which belong to $S_i$ as a $\# (S_i)$-braid diagram, regardless of the position of the endpoints. 
For example, we assume the diagrams shown in the center and the right-hand side in Figure \ref{fig-layer} as a $3$-braid and $2$-braid diagrams, respectively. 
We call each braid diagram $B_i$ consisting of the strands in $S_i$ a {\it layer} and denote $B= B_1 \oplus B_2 \oplus \dots \oplus B_k$. 
In particular, we say an $n$-braid diagram $B$ is {\it completely layered} when $B$ is layered with $n$ layers where each layer is consisting of a single strand. 
By definition, $B$ is completely layered if and only if $\mathrm{wd}(B)=0$. 
Therefore, the warping degree represents how far a braid diagram $B$ is from a completely layered braid diagram, like the warping degree of a link diagram $D$ represents how far $D$ is from a monotone diagram, which is a diagram of a trivial link. 

\begin{figure}[ht]
\centering
\includegraphics[width=7cm]{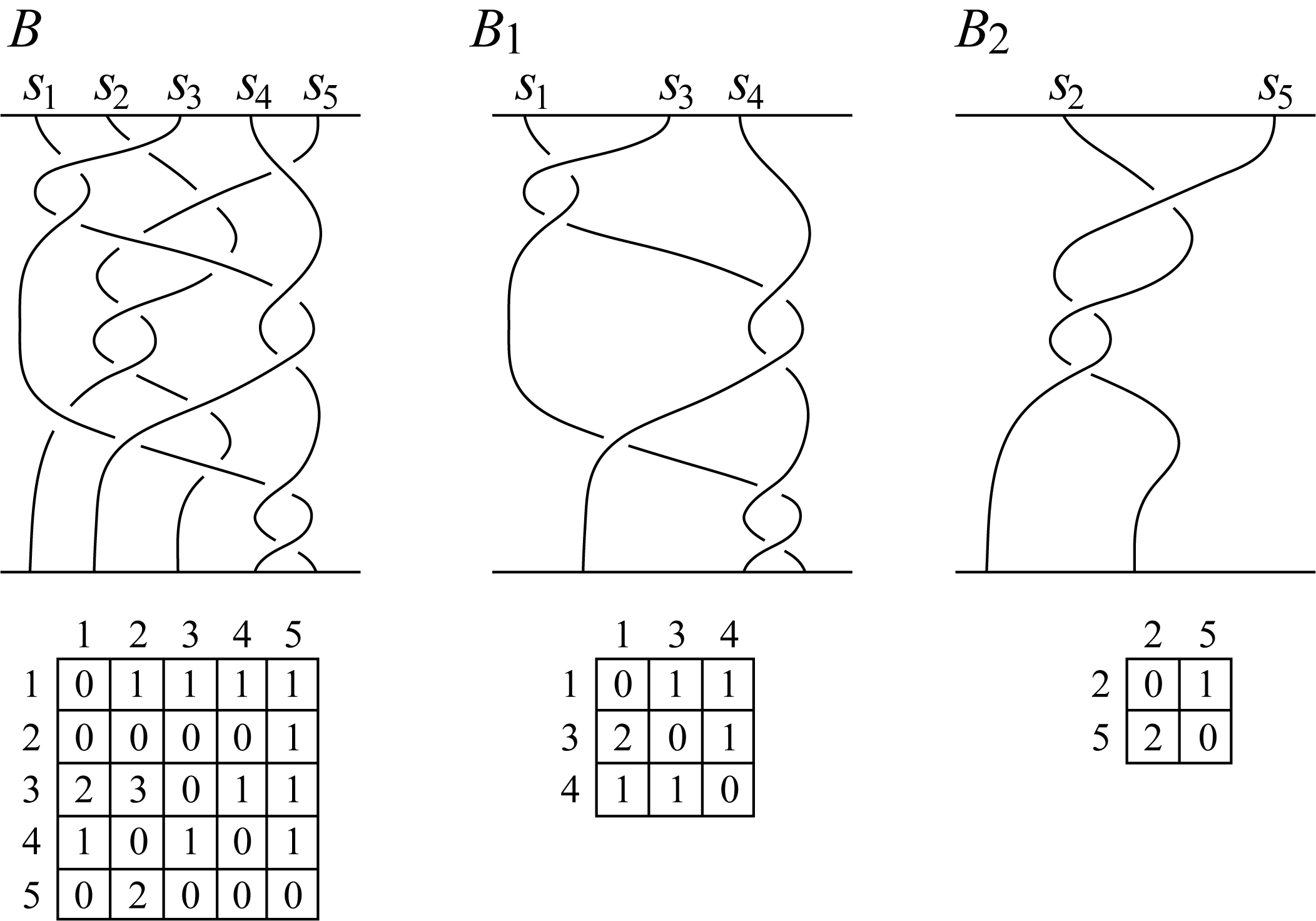}
\caption{A layered diagram $B= B_1 \oplus B_2$ with the layers $B_1$ and $B_2$ of the sets of strands $S_1= \{ s_1, s_3 , s_4 \}$ and $S_2 = \{ s_2 , s_5 \}$. The determinants are $\mathrm{det}(B)=-6$, $\mathrm{det}(B_1)=3$ and $\mathrm{det}(B_2 )=-2$, and we have $\mathrm{det}(B)= \mathrm{det}(B_1) \mathrm{det}(B_2)$.  }
\label{fig-layer}
\end{figure}

The warping degree of a braid diagram is useful to measure a complexity of the closure as well. 
For a closure $D$ of a pure braid $B$, we have the inequality\footnote{The non-pure version of the inequality was also shown in \cite{ASA} with some condition of the sequence of strands.} $u(D) \leq \mathrm{wd}(B)$ shown in \cite{ASA}, namely the warping degree is an upper bound of the unlinking number $u(D)$ of the link diagram $D$. 
Even for non-pure braids, the warping degree was employed to estimate the unknotting number of the closure for weaving knots in \cite{ASA}. 
However, finding the warping degree is not always easy for large number of strands; each $n$-braid has $n!$ orders of strands. 
We call the problem to find the value of the warping degree the {\it warping degree problem}. 

In this paper, we will define the ``over-under matrix,'' or OU matrix, which represents the over/under information of each pair of strands of a braid diagram. 
By definition given in Section \ref{subs-def}, the OU matrix $M(B_{\mathbf{s}})$ of a braid diagram $B$ with the sequence of strands $\mathbf{s}=(s_1, s_2, \dots , s_n )$ is the matrix $(m_{ij})$ such that $m_{ij}= \mathrm{wd}(s_j , s_i)$ (see Figure \ref{fig-5}).
We will show that the determinant of the OU matrix $M(B_{\mathbf{s}})$ does not depend on the order of the strands of $\mathbf{s}$ in Section \ref{section-oum} and denote it by $\mathrm{det}(B)$, and discuss how the determinant reflects the warping degree or layeredness of braid diagrams, showing the following theorems (see Figure \ref{fig-layer}). 

\medskip
\begin{thm}
Let $B$ be a layered braid diagram with layers $B_1$, $B_2$. 
The determinant of the OU matrix of $B$ is the product of the determinants of $B_1$ and $B_2$, namely, 
$$\mathrm{det}(B_1 \oplus B_2)=\mathrm{det}(B_1)\mathrm{det}(B_2).$$
\label{thm-det-layer}
\end{thm}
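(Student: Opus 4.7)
The plan is to exploit the freedom from Section~\ref{section-oum} to reorder strands without changing the determinant, and to choose an ordering that turns the OU matrix of $B$ into a block upper triangular matrix whose diagonal blocks are precisely the OU matrices of $B_1$ and $B_2$.

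First I would fix orderings $\mathbf{s}_1$ and $\mathbf{s}_2$ of the strands of the layers $B_1$ and $B_2$, and let $\mathbf{s}$ be the concatenation of $\mathbf{s}_1$ followed by $\mathbf{s}_2$. Since the three determinants in question do not depend on the chosen orderings, it suffices to prove
\[
\det M(B_{\mathbf{s}}) \;=\; \det M((B_1)_{\mathbf{s}_1}) \cdot \det M((B_2)_{\mathbf{s}_2}).
\]
I would then inspect the four blocks of $M(B_{\mathbf{s}})$. For an entry $(i,j)$ with $s_i \in S_2$ and $s_j \in S_1$, lying in the bottom-left block, $m_{ij}=\mathrm{wd}(s_j,s_i)$ counts crossings in $B$ where the top-layer strand $s_j$ passes under the bottom-layer strand $s_i$; by the layeredness condition applied with $l=1$, $m=2$, this vanishes. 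For entries with both indices in $S_k$, every crossing contributing to $m_{ij}$ is a crossing between two strands of $S_k$, hence a crossing of the layer $B_k$ viewed as its own braid diagram; so the top-left and bottom-right diagonal blocks coincide with $M((B_1)_{\mathbf{s}_1})$ and $M((B_2)_{\mathbf{s}_2})$, respectively. The top-right block is unconstrained but plays no role.

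Having established
\[
M(B_{\mathbf{s}}) \;=\; \begin{pmatrix} M((B_1)_{\mathbf{s}_1}) & * \\ 0 & M((B_2)_{\mathbf{s}_2}) \end{pmatrix},
\]
the factorization of the determinant follows from the standard formula for block triangular matrices. I do not expect a genuine obstacle: the argument is essentially bookkeeping, and the only point deserving a careful sentence is the assertion that crossings between two strands of the same layer in $B$ are literally the crossings of that layer considered as a standalone braid diagram, which is immediate from the construction of the layers. A small remark can be added that the result extends to any number of layers by induction, since $B_1 \oplus (B_2 \oplus \cdots \oplus B_k)$ is itself layered with two layers.
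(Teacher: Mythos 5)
Your proposal is correct and follows essentially the same route as the paper: the paper isolates your block decomposition as Lemma \ref{lem-block} (ordering the strands so that all of $S_1$ precedes $S_2$, noting the lower-left block vanishes by the layeredness condition and the diagonal blocks are the layers' OU matrices) and then applies the block-triangular determinant formula exactly as you do.
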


\begin{thm}
For any braid diagram $B$, we have $\mathrm{wd}(B)\neq 0$ if $\mathrm{det}(B) \neq 0$. 
\label{thm-det-wd}
\end{thm}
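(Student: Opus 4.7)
The plan is to prove the contrapositive: if $\mathrm{det}(B) \neq 0$, then $\mathrm{wd}(B) \neq 0$, or equivalently, if $\mathrm{wd}(B) = 0$, then $\mathrm{det}(B) = 0$. So I assume $\mathrm{wd}(B) = 0$ and aim to exhibit a particular ordering of the strands that forces the OU matrix to have zero determinant, invoking the fact (established earlier in Section~\ref{section-oum}) that $\mathrm{det}(M(B_{\mathbf{s}}))$ does not depend on $\mathbf{s}$.

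First, I would unpack the hypothesis $\mathrm{wd}(B) = 0$ via the definition: there exists an ordering $\mathbf{s} = (s_{i_1}, s_{i_2}, \dots, s_{i_n})$ of the strands with $\mathrm{wd}(B_{\mathbf{s}}) = 0$. Since $\mathrm{wd}(B_{\mathbf{s}})$ is a sum of non-negative integers $\mathrm{wd}(s_{i_p}, s_{i_q})$ for $p < q$, every such summand must vanish; equivalently, $s_{i_p}$ is never under $s_{i_q}$ whenever $p < q$. (This is essentially the observation, noted in the excerpt, that $\mathrm{wd}(B) = 0$ is equivalent to $B$ being completely layered.)

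Next I would examine the OU matrix $M(B_{\mathbf{s}}) = (m_{pq})$ under this ordering, where $m_{pq} = \mathrm{wd}(s_{i_q}, s_{i_p})$. For the strictly-lower-triangular entries (i.e.\ $p > q$), we have $m_{pq} = \mathrm{wd}(s_{i_q}, s_{i_p})$ with $q < p$, which is zero by the preceding paragraph. So $M(B_{\mathbf{s}})$ is upper triangular. For the diagonal entries, $m_{pp} = \mathrm{wd}(s_{i_p}, s_{i_p}) = 0$, since a strand is never under itself at any crossing. Hence the diagonal of the upper triangular matrix $M(B_{\mathbf{s}})$ is identically zero, giving $\mathrm{det}(M(B_{\mathbf{s}})) = 0$. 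By the order-independence of the determinant, $\mathrm{det}(B) = 0$, as required.

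There is no real obstacle here beyond carefully lining up the indexing convention $m_{ij} = \mathrm{wd}(s_j, s_i)$ with the ordering condition coming from $\mathrm{wd}(B_{\mathbf{s}}) = 0$, so that one sees the zeros land strictly below the diagonal rather than above. The proof is essentially a one-line observation once the contrapositive is taken and the right ordering is fixed; the real content has already been packaged into the order-independence result of Section~\ref{section-oum}.
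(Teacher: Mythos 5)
Your proof is correct and follows essentially the same route as the paper: the paper also argues via the contrapositive, noting that $\mathrm{wd}(B)=0$ gives a permutation $\pi$ with $f_B(\pi)=0$ (the sum of the on-and-below-diagonal entries), so $M_B(\pi)$ is strictly upper triangular and has zero determinant. Your version just unpacks the objective-function step directly from the definition of $\mathrm{wd}(B_{\mathbf{s}})$, with the indexing handled correctly.
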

\medskip

\noindent The rest of the paper is organized as follows. 
In Section \ref{section-oum}, we define and study the OU matrix and prove Theorem \ref{thm-det-layer}. 
In Section \ref{section-wd-cm}, we discuss the warping degree problem in terms of the ``linear ordering problem'' on the OU matrices, and prove Theorem \ref{thm-det-wd}. 
In Section \ref{section-pb}, diagram-independent invariants of positive braids are introduced as an application with examples.

\section{Over-under matrix}
\label{section-oum}

In this section, we define the OU matrix in Subsection \ref{subs-def}, and investigate its properties in Subsections \ref{subs-sim} and \ref{subs-prop}. 
Then we prove Theorem \ref{thm-det-layer} in Subsection \ref{subs-pf}

\subsection{Definition of the OU matrix}
\label{subs-def}

In this paper, we deal with two types of permutations, the ``braid permutation'' $\rho$ and the ``strand permutation'' $\pi$. 
Since each $n$-braid connects the points on the top and bottom, there is a certain permutation $\rho : \{ 1, 2, \dots , n \} \to \{ 1, 2, \dots , n \}$ for each braid or braid diagram. 
When a strand of a braid $\beta$ or a braid diagram $B$ is at the $i$th position from the left-hand side on the top and at the $j$th on the bottom, we denote $\rho (i)=j$. 
We call such $\rho$ the {\it braid permutation} associated to $\beta$ or $B$. 
For example, the braid $\sigma_1 \sigma_2^{-1}\sigma_3^2$ in Figure \ref{fig-braid} has braid permutation $\rho (1,2,3,4)=(3,1,2,4)$. 
Braid permutation is one of the basic tools to apart braids. 
In this paper, we consider one more permutation to discuss a complexity of braid diagrams. 
For an $n$-braid diagram $B$, label the strands $s_1, s_2, \dots ,s_n$ from the left-hand side to the right-hand side on the top of the braid diagram. 
We can consider the permutation on $( s_1, s_2, \dots , s_n )$ to be the order of strands. 
Let $\pi : \{ 1, 2, \dots , n \} \to \{ 1, 2, \dots , n \}$ be a permutation, where $\pi (i) = j$ implies that the $i$th component of $\pi$ is $s_j$. 
For example, when $\pi (1)=3$, $\pi(2)=2$, $\pi(3)=1$ and $\pi(4)=4$ (we also denote that by $\pi (1, 2, 3, 4)=(3, 2, 1, 4)$, or simply $\pi = (3, 2, 1, 4)$), we have the sequence $\mathbf{s}' =( s_3 , s_2 , s_1 , s_4 )$ associated to $\pi$. 
We call such a permutation a {\it strand permutation}, or simply a permutation in this paper.

Let $B$ be a braid diagram with $n$ strands. 
Label the strands $s_1, s_2, \dots ,s_n$ from the left to the right on the top of the braid diagram. 
Let $\mathbf{s}$ be a sequence of the strands $s_1, s_2, \dots ,s_n$ in any order. 
The {\it Over-under matrix}, or OU matrix, denoted by $M(B_{\mathbf{s}})$, is an $n \times n$ matrix such that the components on the main diagonal are all zero and each $(i,j)$-component represents the number of over-crossings on the $i$th strand over the $j$th strand, where the ``$k$th strand'' stands for the $k$th component of the sequence $\mathbf{s}$. 
As mentioned in Section \ref{section-intro}, $M(B_{\mathbf{s}})$ has the $(i,j)$-component as the warping degree $\mathrm{wd}(s_j , s_i)$ when $\mathbf{s} =(s_1 , s_2 , \dots , s_n)$. 
Since the matrix $M(B_{\mathbf{s}})$ is determined by the strand permutation $\pi$ on $( s_1, s_2, \dots , s_n )$, we also denote $M(B_{\mathbf{s}})$ by $M_B(\pi)$.

\subsection{Permutation-independent quantities}
\label{subs-sim}

In this subsection, we show that the OU matrices of the same braid diagram with any strand permutation are similar to each other, and consider some quantities derived from the OU matrix that are permutation-independent. 
Let $I_{kl}$ be an elementary matrix which is obtained from the $n \times n$ identity matrix $I$ by swapping the $k$th and $l$th rows, namely the square matrix $(m_{ij})$ such that $m_{kl}=m_{lk}=1$, $m_{ii}=1$ if $i \neq k, l$, and $m_{ij}=0$ otherwise. 
For transposition, we have the following lemma. 

\medskip
\begin{lem}
Let $\pi$, ${\pi}'$ be permutations on $( 1, 2, \dots , n )$ which are related by a single transposition $\tau_{kl}$. 
Let $M_B(\pi)$, $M_B({\pi}')$ be the OU matrices for the same $n$-braid diagram $B$. 
We have
$$ M_B ({\pi}')= I_{kl} M_B (\pi) I_{kl}.$$
\label{lem-I}
\end{lem}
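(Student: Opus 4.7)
The plan is to verify the identity by a straightforward component-wise comparison, exploiting the well-known fact that conjugation by an elementary swap matrix simultaneously swaps two rows and two columns.

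First I would record the general linear-algebra observation: for any $n\times n$ matrix $A=(a_{ij})$, the matrix $I_{kl} A I_{kl}$ has $(i,j)$-entry equal to $a_{\tau_{kl}(i),\, \tau_{kl}(j)}$, where $\tau_{kl}$ denotes the transposition exchanging $k$ and $l$. This is because left multiplication by $I_{kl}$ swaps rows $k$ and $l$, while right multiplication by $I_{kl}$ swaps columns $k$ and $l$, and these two operations commute. So the right-hand side of the claimed equation is precisely the matrix obtained from $M_B(\pi)$ by swapping rows $k$ and $l$ and then swapping columns $k$ and $l$.

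Next I would unpack the definition of the OU matrix in terms of the strand permutation. By the convention set up in Subsection \ref{subs-def}, the sequence of strands associated to $\pi$ is $(s_{\pi(1)}, s_{\pi(2)}, \dots, s_{\pi(n)})$, and consequently the $(i,j)$-entry of $M_B(\pi)$ equals the number of crossings at which $s_{\pi(i)}$ lies over $s_{\pi(j)}$, i.e.\ $\mathrm{wd}(s_{\pi(j)}, s_{\pi(i)})$. Since $\pi'=\pi\circ\tau_{kl}$ (equivalently, $\pi'$ agrees with $\pi$ except that the entries in positions $k$ and $l$ are swapped), the identity $\pi'(i)=\pi(\tau_{kl}(i))$ holds for all $i$.

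I would then combine these two observations: for every pair $(i,j)$,
\[
(M_B(\pi'))_{ij} \;=\; \mathrm{wd}(s_{\pi'(j)}, s_{\pi'(i)}) \;=\; \mathrm{wd}(s_{\pi(\tau_{kl}(j))}, s_{\pi(\tau_{kl}(i))}) \;=\; (M_B(\pi))_{\tau_{kl}(i),\, \tau_{kl}(j)},
\]
which equals $(I_{kl} M_B(\pi) I_{kl})_{ij}$ by the first step. No genuine obstacle arises — the whole argument is bookkeeping, and the only point requiring care is matching the two conventions (the position-vs-label convention for $\pi$ on one hand, and the row-vs-column convention for $M_B$ on the other) so that the transposition acts in the same way on both sides.
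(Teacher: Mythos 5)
Your proof is correct and follows the same route as the paper, which simply asserts in one line that $M_B(\pi')$ is obtained from $M_B(\pi)$ by swapping the $k$th and $l$th rows and then the $k$th and $l$th columns; you have merely written out the entry-wise bookkeeping (including the correct reading $\pi'=\pi\circ\tau_{kl}$, i.e.\ swapping positions $k$ and $l$ of the strand sequence) that the paper leaves implicit.
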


\begin{proof}
By definition, $M_B ({\pi}')$ is obtained from $M_B (\pi)$ by swapping the $k$th and $l$th rows and then the $k$th and $l$th columns (or in the opposite order). 
\end{proof}
\medskip

\begin{ex}
Let $id =(1,2,3,4)$, $\pi = (3,2,1,4)$. 
Since they are related by a transposition $\tau_{13}$, we have $M_B ( \pi )=I_{13} M_B ( id ) I_{13}$ for the braid diagram $B$ in Figure \ref{fig-3214}, namely, 
\begin{align*}
\begin{pmatrix}
0 & 2 & 0 & 1 \\
0 & 0 & 2 & 1 \\
0 & 1 & 0 & 0 \\
0 & 1 & 0 & 0 
\end{pmatrix}
=
\begin{pmatrix}
0 & 0 & 1 & 0 \\
0 & 1 & 0 & 0 \\
1 & 0 & 0 & 0 \\
0 & 0 & 0 & 1 
\end{pmatrix}
\begin{pmatrix}
0 & 1 & 0 & 0 \\
2 & 0 & 0 & 1 \\
0 & 2 & 0 & 1 \\
0 & 1 & 0 & 0 
\end{pmatrix}
\begin{pmatrix}
0 & 0 & 1 & 0 \\
0 & 1 & 0 & 0 \\
1 & 0 & 0 & 0 \\
0 & 0 & 0 & 1 
\end{pmatrix} .
\end{align*}
We also have $M_B(\pi)=I_{12} I_{23} I_{12} M_B(id) I_{12} I_{23} I_{12}$.
\begin{figure}[ht]
\centering
\includegraphics[width=6cm]{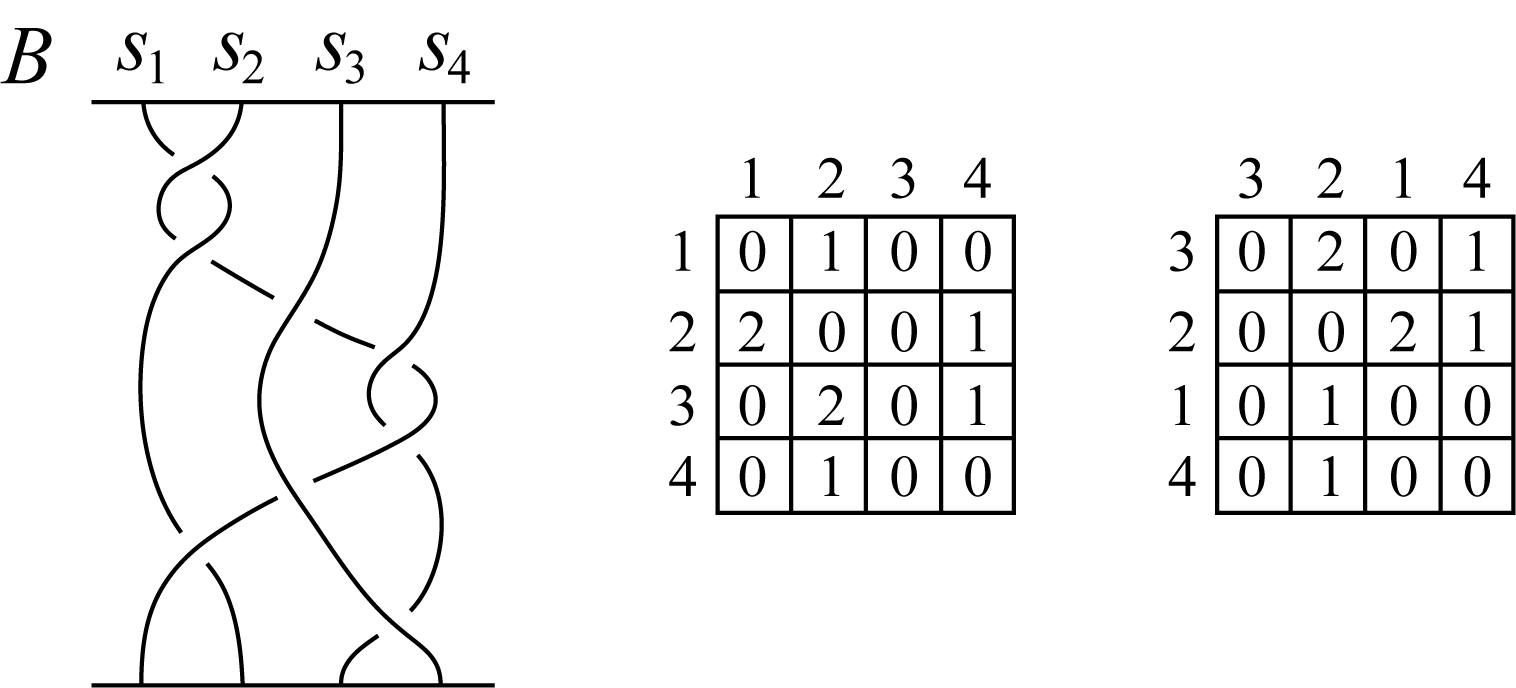}
\caption{A braid diagram $B$ and the OU matrices $M(B_{\mathbf{s}})$ and $M(B_{\mathbf{s}'})$ for  $\mathbf{s}=( s_1, s_2 , s_3 , s_4 )$ and $\mathbf{s}' = ( s_3 , s_2 , s_1 , s_4 )$. }
\label{fig-3214}
\end{figure}

\end{ex}
\medskip

\noindent Since any pair of permutations are related by a finite sequence of transpositions, we have the following proposition. 

\medskip
\begin{prop}
For any braid diagram $B$ and permutations $\pi$ and ${\pi}'$, $M_B ({\pi}')$ is similar to $M_B (\pi)$. Namely, there is an invertible matrix $N$ such that 
$$M_B ({\pi}') = N^{-1} M_B (\pi ) N.$$
\label{prop-similar}
\end{prop}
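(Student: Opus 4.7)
The plan is to bootstrap from Lemma \ref{lem-I} by writing an arbitrary change of strand permutation as a composition of transpositions and then iterating. Since the symmetric group $S_n$ is generated by transpositions, for any pair $\pi$, $\pi'$ we can find a finite sequence of transpositions $\tau_{k_1 l_1}, \tau_{k_2 l_2}, \dots, \tau_{k_m l_m}$ whose successive application transforms $\pi$ into $\pi'$. Each intermediate step changes the OU matrix in the explicit way described in Lemma \ref{lem-I}.

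Next I would carry out a simple induction on $m$. For the base case $m=1$, Lemma \ref{lem-I} gives $M_B(\pi') = I_{k_1 l_1} M_B(\pi) I_{k_1 l_1}$; observing that the swap matrix $I_{kl}$ is its own inverse (swapping the same two rows of $I$ twice returns $I$), we can rewrite this as $I_{k_1 l_1}^{-1} M_B(\pi) I_{k_1 l_1}$, exhibiting similarity with $N = I_{k_1 l_1}$. For the inductive step, if the result holds after $m-1$ transpositions, then applying Lemma \ref{lem-I} once more conjugates the intermediate OU matrix by $I_{k_m l_m}$. Putting everything together,
\[
M_B(\pi') \;=\; I_{k_m l_m} \cdots I_{k_1 l_1}\; M_B(\pi) \;I_{k_1 l_1} \cdots I_{k_m l_m}.
\]

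Setting $N = I_{k_1 l_1} I_{k_2 l_2} \cdots I_{k_m l_m}$, the key algebraic observation is that each factor is involutive, so
\[
N^{-1} \;=\; I_{k_m l_m}^{-1} \cdots I_{k_1 l_1}^{-1} \;=\; I_{k_m l_m} \cdots I_{k_1 l_1},
\]
which is precisely the left-hand product appearing above. Hence $M_B(\pi') = N^{-1} M_B(\pi) N$ with $N$ invertible (being a product of invertible matrices), as required.

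Honestly, there is no real obstacle here: the content is carried entirely by Lemma \ref{lem-I} plus the involutive nature of the elementary swap matrices $I_{kl}$. The only thing to be mindful of is getting the order of the $I_{k_i l_i}$ in $N$ versus $N^{-1}$ correct, which is why I would spell out the iteration explicitly rather than hand-wave the induction.
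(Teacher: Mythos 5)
Your proposal is correct and follows essentially the same route as the paper: decompose the change of permutation into transpositions, iterate Lemma \ref{lem-I}, and use the fact that each $I_{kl}$ is its own inverse to identify the left-hand product with $N^{-1}$ for $N = I_{k_1 l_1}\cdots I_{k_m l_m}$. The paper's proof is just a more condensed version of the same argument.
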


\begin{proof}
When ${\pi}'$ is obtained from $\pi$ by a sequence of transpositions $\tau_{k_1 l_1}$, $\tau_{k_2 l_2}$, $\dots$ , $\tau_{k_i l_i}$, we have 
\begin{align*}
M_B ({\pi}')= I_{k_i l_i} \dots I_{k_2 l_2}I_{k_1 l_1} M_B (\pi) I_{k_1 l_1} I_{k_2 l_2} \dots I_{k_i l_i}
\end{align*}
by Lemma \ref{lem-I}. 
Since $\left( I_{k_i l_i} \dots I_{k_2 l_2}I_{k_1 l_1} \right) \left( I_{k_1 l_1} I_{k_2 l_2} \dots I_{k_i l_i} \right) =I$, the matrix $N= I_{k_1 l_1} I_{k_2 l_2} \dots I_{k_i l_i}$ satisfies $M_B ({\pi}') = N^{-1} M_B (\pi ) N$.
\end{proof}
\medskip

\noindent From Proposition \ref{prop-similar}, we have the following corollary. 

\medskip
\begin{cor}
For any braid diagram $B$, the rank, determinant, trace, eigenvalues and characteristic polynomial of the OU matrix $M_B( \pi )$ do not depend on the permutation $\pi$.
\label{cor-det}
\end{cor}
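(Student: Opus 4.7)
The plan is to invoke Proposition \ref{prop-similar} directly and then cite the standard fact from linear algebra that similarity preserves each of the listed invariants. By Proposition \ref{prop-similar}, for any two strand permutations $\pi$ and $\pi'$ there exists an invertible matrix $N$ (a product of elementary swap matrices $I_{kl}$) with $M_B(\pi') = N^{-1} M_B(\pi) N$. So the task reduces to checking that rank, determinant, trace, eigenvalues, and characteristic polynomial are conjugation-invariant.

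For each invariant I would give the usual one-line verification: multiplication by the invertible matrix $N$ on either side does not change rank; the determinant is multiplicative so $\det(N^{-1} M_B(\pi) N) = \det(N)^{-1}\det(M_B(\pi))\det(N) = \det(M_B(\pi))$; the trace is invariant under cyclic permutation, giving $\operatorname{tr}(N^{-1} M_B(\pi) N) = \operatorname{tr}(M_B(\pi) N N^{-1}) = \operatorname{tr}(M_B(\pi))$; and the characteristic polynomial satisfies
\[
\det(xI - N^{-1} M_B(\pi) N) = \det\bigl(N^{-1}(xI - M_B(\pi))N\bigr) = \det(xI - M_B(\pi)),
\]
so the eigenvalues (the roots) also agree.

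There is essentially no obstacle here: the corollary is a direct consequence of Proposition \ref{prop-similar} together with textbook facts about similar matrices. The only decision to make is how much detail to include; I would present it as a short paragraph listing the five invariants and noting that each is a standard similarity invariant, referring back to Proposition \ref{prop-similar} as the sole substantive ingredient.
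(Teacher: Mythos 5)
Your proposal is correct and matches the paper's approach exactly: the paper derives this corollary directly from Proposition \ref{prop-similar}, treating the similarity invariance of rank, determinant, trace, eigenvalues and characteristic polynomial as standard; you simply supply the routine one-line verifications that the paper omits.
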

\medskip

\noindent From now on, we denote $\mathrm{rank}M(B_{\mathbf{s}})$ or $\mathrm{rank}M_B (\pi)$ by $\mathrm{rank}(B)$. 
We also denote $\mathrm{det}M(B_{\mathbf{s}})$ or $\mathrm{det}M_B (\pi)$ by $\mathrm{det}(B)$. 
Moreover, we have the following corollary. 

\medskip
\begin{cor}
For an OU matrix $(m_{ij})$ of a braid diagram $B$, the multiset of the $n$ multisets 
$$O(B)= \{ \{ m_{11}, m_{12}, \dots , m_{1n} \} , \{ m_{21}, m_{22}, \dots , m_{2n} \} , \dots , \{ m_{n1}, m_{n2}, \dots , m_{nn} \} \}$$ 
does not depend on the strand permutation $\pi$. 
The multiset 
$$U(B)= \{ \{ m_{11}, m_{21}, \dots , m_{n1} \} , \{ m_{12}, m_{22}, \dots , m_{n2} \} , \dots , \{ m_{1n}, m_{2n}, \dots , m_{nn} \} \}$$ 
does not depend on $\pi$, too. 
\label{prop-set}
\end{cor}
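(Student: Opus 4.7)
The plan is to reduce the corollary to the single-transposition case already handled in Lemma~\ref{lem-I}, and then invoke the fact that every permutation factors into transpositions. It suffices to verify that if $\pi'$ is obtained from $\pi$ by one transposition $\tau_{kl}$, then the multisets $O(B)$ and $U(B)$ computed from $M_B(\pi)$ and $M_B(\pi')$ coincide; an induction on the length of a transposition decomposition then finishes the proof.

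By Lemma~\ref{lem-I}, $M_B(\pi') = I_{kl} M_B(\pi) I_{kl}$. Left-multiplication by $I_{kl}$ swaps rows $k$ and $l$ of $M_B(\pi)$, and right-multiplication by $I_{kl}$ swaps columns $k$ and $l$. For $O(B)$, the row swap only reorders the outer family $\{ \{ m_{i1}, \dots , m_{in} \} \}_i$, which leaves the outer multiset fixed; the column swap permutes, within each row, the entries in positions $k$ and $l$, but each row is recorded only as the unordered multiset $\{ m_{i1}, \dots , m_{in} \}$, so this permutation is invisible. Hence $O(B)$ is unchanged.

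For $U(B)$, the argument is entirely symmetric: swapping columns reorders only the outer family of column-multisets, and swapping rows permutes entries inside each column-multiset without altering it as a multiset.

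No real obstacle is expected, as the corollary is essentially a bookkeeping consequence of Lemma~\ref{lem-I}. The only point requiring attention is to remember that $O(B)$ and $U(B)$ are multisets of multisets, so reorderings of both the outer index and the inner entries are absorbed; everything else is formal.
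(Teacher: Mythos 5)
Your proof is correct and follows essentially the same route as the paper, which derives the corollary from the observation (Lemma~\ref{lem-I} and Proposition~\ref{prop-similar}) that changing the strand permutation amounts to a simultaneous permutation of rows and columns, so the outer family is merely reordered and the inner reorderings are absorbed by the multiset structure. The paper leaves this bookkeeping implicit; your write-up just makes it explicit.
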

\medskip

\noindent We note that the multiset $\{ m_{i1}, m_{i2}, \dots , m_{in} \}$ has the components $m_{ii}=0$ and $m_{ij}$ as the number of over-crossings on $s_i$ over $s_j$. 
We call the multiset $O(B)$ the {\it over-crossing set} and $U(B)$ the {\it under-crossing set of $B$}. 

\medskip
\begin{ex}
The braid diagram $B$ in Figure \ref{fig-3214} has $\mathrm{rank}(B)=3$, $\mathrm{det}(B)=0$ and the eigenvalues $0, \pm \sqrt{3}$. 
The over- and under-crossing sets are
\begin{align*}
O(B)= \{ \{ 0,0,0,1 \} , \{ 0,0,0,1 \} , \{ 0,0,1,2 \} , \{ 0,0,1,2 \} \} , \\
U(B)= \{ \{ 0,0,0,0 \} , \{ 0,0,0,2 \} , \{ 0,0,1,1 \} , \{ 0,1,1,2 \} \} . 
\end{align*}
\end{ex}
\medskip

\subsection{Properties of OU matrix}
\label{subs-prop}

In this subsection, we see properties of OU matrix. 
For the reverse ${\pi}'=( \pi (n), \pi (n-1), \dots , \pi (1))$ of a strand permutation $\pi = ( \pi (1), \pi (2) , \dots , \pi (n))$, we have the following lemma.

\medskip 
\begin{lem}
Let $\pi$ be a permutation on $(1,2, \dots , n)$ and let ${\pi}'$ be the reverse of $\pi$. 
Then, $M_B({\pi}') = I' M_B(\pi) I'$ holds, where $I'$ is the $n \times n$ matrix in which the $(i, n+1-i)$-component is $1$ for $i=1, 2, \dots , n$ and the others are $0$. 
\label{lem-pi-inv}
\end{lem}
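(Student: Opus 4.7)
The plan is to verify the claimed identity entrywise directly from the definitions; there is no genuine obstacle here, only a routine bookkeeping step.

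Recall that, by the definition of the OU matrix, the $(i,j)$-entry of $M_B(\pi)$ counts the crossings at which the $\pi(i)$th strand passes over the $\pi(j)$th strand. By the definition of the reverse, $\pi'(i) = \pi(n{+}1{-}i)$, so reading the OU matrix off directly gives
\[
M_B(\pi')_{ij} \;=\; M_B(\pi)_{\,n+1-i,\,n+1-j}.
\]
On the other hand, $I'$ is the exchange matrix with $I'_{ik} = \delta_{k,\,n+1-i}$. Left-multiplication by $I'$ reverses the order of the rows, and right-multiplication by $I'$ reverses the order of the columns, so
\[
(I' M_B(\pi) I')_{ij} \;=\; M_B(\pi)_{\,n+1-i,\,n+1-j}.
\]
Comparing the two displays yields the lemma.

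A slightly more conceptual derivation, which I would mention as an alternative, is to note that the reversal of a sequence of length $n$ factors as the product of the $\lfloor n/2 \rfloor$ pairwise-disjoint transpositions $\tau_{1,n}, \tau_{2,n-1}, \ldots$ Iterating Lemma \ref{lem-I} then gives $M_B(\pi') = N M_B(\pi) N$ with $N = I_{1,n} I_{2,n-1} \cdots$, and since these elementary matrices act on disjoint index pairs they commute; their product is immediately seen to be $I'$. Because $I'^2 = I$, the matrix $I'$ is its own inverse, so no further adjustment is needed. In either approach, the only point that truly requires verification is the row/column-reversal action of $I'$, which is immediate from $I'_{ik} = \delta_{k,\,n+1-i}$.
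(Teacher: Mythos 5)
Your proof is correct and follows essentially the same route as the paper: both verify entrywise that $M_B(\pi')_{ij}=M_B(\pi)_{n+1-i,\,n+1-j}$ using $\pi'(i)=\pi(n+1-i)$, and identify this with conjugation by the exchange matrix $I'$ (your version just spells out the row/column-reversal action of $I'$ more explicitly than the paper does). The alternative you sketch via disjoint transpositions and Lemma \ref{lem-I} is also valid but not needed.
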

\medskip 

\noindent In other words, $M_B({\pi}')$ is a matrix obtained from $M_B(\pi)$ by a clockwise (or counterclockwise) rotation by $180^{\circ}$, namely, the $(i,j)$-component of $M_B({\pi}')$ coincides with the $(n+1-i, n+1-j)$-component of $M_B(\pi)$.  

\medskip 
\begin{ex}
For the braid diagram $B$ shown in Figure \ref{fig-3214}, we have 
\begin{align*}
M_B(\pi)=
\begin{pmatrix}
0 & 1 & 0 & 0 \\
2 & 0 & 0 & 1 \\
0 & 2 & 0 & 1 \\
0 & 1 & 0 & 0 
\end{pmatrix}
, \ M_B({\pi}')=
\begin{pmatrix}
0 & 0 & 1 & 0 \\
1 & 0 & 2 & 0 \\
1 & 0 & 0 & 2 \\
0 & 0 & 1 & 0 
\end{pmatrix}
\end{align*}
when $\pi = (1,2,3,4)$ and ${\pi}' =(4,3,2,1)$. 
\end{ex}
\medskip 

\noindent {\it Proof of Lemma \ref{lem-pi-inv}.} \ 
On $M_B(\pi)$, each $(i,j)$-component represents the number of crossings where $s_{\pi (i)}$ is over $s_{\pi (j)}$, which appears at the $(n+1-i, n+1-j)$-component in $M_B({\pi}')$ since the $i$th component of $\pi$ coincides with the $(n+1-i)$th component of ${\pi}'$. 
\qed \\
\medskip

\noindent For braid diagrams $B$, $C$ with the same number of strands, we represent their braid product by $BC$. 
The following proposition will be used in Section \ref{section-pb}. 

\medskip
\begin{prop}
Let $B$, $C$ be braid diagrams of $n$ strands, and let $\rho_B$ be the braid permutation of $B$. We have 
$$M_{BC}(\pi)=M_B(\pi)+M_C(\pi \rho_B)$$
for any $\pi$. 
In particular, when $\rho_B = id$, i.e., $B$ is a pure braid, we have 
$$M_{BC}(\pi)= M_B(\pi)+M_C(\pi).$$
\label{prop-product}
\end{prop}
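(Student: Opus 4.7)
The plan is to decompose the crossings of $BC$ into those lying in the $B$-block and those lying in the $C$-block, and to track how strand labels transform between the three diagrams.

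First I would set up notation: label the top strands of $B$ (equivalently, the top strands of $BC$) as $s_1,\dots,s_n$ from left to right, and label the top strands of $C$ separately as $c_1,\dots,c_n$. By the defining property of the braid permutation $\rho_B$, the strand $s_i$ of $BC$ exits the $B$-block at horizontal position $\rho_B(i)$, so inside the $C$-block the arc $s_i$ of $BC$ coincides with the arc $c_{\rho_B(i)}$ of $C$.

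Next I would fix a strand permutation $\pi$ and unpack the $(i,j)$-entry of $M_{BC}(\pi)$ from its definition: it counts the crossings of $BC$ at which $s_{\pi(i)}$ passes over $s_{\pi(j)}$. Every such crossing lies in exactly one of the two blocks, so the count splits cleanly as a $B$-part plus a $C$-part. The $B$-part is $(M_B(\pi))_{ij}$ by definition. For the $C$-part, the identification $s_k \leftrightarrow c_{\rho_B(k)}$ inside the $C$-block turns a crossing of $s_{\pi(i)}$ over $s_{\pi(j)}$ into a crossing of $c_{\rho_B(\pi(i))}$ over $c_{\rho_B(\pi(j))}$. Under the composition convention $(\pi\rho_B)(k)=\rho_B(\pi(k))$, this is exactly $(M_C(\pi\rho_B))_{ij}$. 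Summing the two contributions entry by entry yields the proposition, and the pure-braid special case follows at once from $\pi\rho_B=\pi$ when $\rho_B=\mathrm{id}$.

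The only real obstacle is bookkeeping: pinning down the composition convention for $\pi\rho_B$ and orienting the relabeling between the $s$-strands of $BC$ and the $c$-strands of $C$ in the correct direction. Once those conventions are fixed, the content of the proposition reduces to the evident observation that the crossings of a vertical stack of two braid diagrams partition into the crossings of the two factors, and the rest is just reading the definitions.
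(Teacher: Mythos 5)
Your proposal is correct and follows essentially the same route as the paper: decompose the crossings of $BC$ into those in the $B$-block and those in the $C$-block, and use the fact that the strand entering $B$ at position $\pi(i)$ enters $C$ at position $\rho_B(\pi(i))$, so the $C$-contribution is the $(i,j)$-entry of $M_C(\pi\rho_B)$. Your explicit remark about the composition convention $(\pi\rho_B)(k)=\rho_B(\pi(k))$ is a welcome clarification that the paper leaves implicit, but it does not change the argument.
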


\begin{proof}
The $\pi (i)$th strand of $BC$ is the $\pi (i)$th strand in $B$, and the $\pi \rho_B (i)$th strand in $C$. 
Then the $(i,j)$-component of $M_{BC}(\pi)$, which is the number of crossings where $s_{\pi (i)}$ is over $s_{\pi (j)}$, is the sum of the number of crossings where the $\pi (i)$th strand is over the $\pi (j)$th strand in $B$ and the crossings where the $\pi \rho_B (i)$th strand is over the $\pi \rho_B (j)$th strand in $C$, which is the sum of the $(i,j)$-components of $M_B(\pi)$ and $M_C( \pi \rho_B)$.
\end{proof}

\subsection{Proof of Theorem \ref{thm-det-layer}}
\label{subs-pf}

In this subsection, we prove Theorem \ref{thm-det-layer}. 

\medskip
\begin{lem}
Let $B= B_1 \oplus B_2$ be a layered braid diagram with two layers $B_1$ and $B_2$ of the sets of strands $S_1$ and $S_2$, respectively. 
Take a sequence of strands $\mathbf{s}$ of $B$ so that any strand in $S_1$ is positioned ahead of any strand in $S_2$. 
Then, the OU matrix $M(B_{\mathbf{s}})$ consists of the four blocks as 
\begin{align*}
M(B_{\mathbf{s}}) =
\begin{pmatrix}
M_1 & N \\
O & M_2
\end{pmatrix},
\end{align*}
where $M_i$ is the OU matrix of $B_i$ with the sequence of strands of $B_i$ following the order of $\mathbf{s}$ for $i=1,2$, and $O$ is a zero matrix.
\label{lem-block}
\end{lem}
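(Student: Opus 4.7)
The plan is to unpack the definitions and read off the block structure of $M(B_{\mathbf{s}})$ directly from the layered hypothesis. With $\mathbf{s}$ chosen so that every strand of $S_1$ precedes every strand of $S_2$, the rows and columns of $M(B_{\mathbf{s}})$ are naturally split into the first $\#(S_1)$ indices and the last $\#(S_2)$ indices, presenting the matrix as a $2 \times 2$ block array. It then remains to identify each of the four blocks.

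For the diagonal blocks, recall that the $(i,j)$-entry of $M(B_{\mathbf{s}})$ counts the crossings at which the $i$th strand of $\mathbf{s}$ passes over the $j$th. When both of these strands lie in $S_1$, every such crossing is a crossing of the sub-diagram $B_1$, so the top-left block is exactly $M_1$ taken in the ordering induced by $\mathbf{s}$; the same argument applied to $S_2$ yields the bottom-right block $M_2$.

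The key step is the vanishing of the bottom-left block. Its $(i,j)$-entry has its row strand $s$ in $S_2$ and its column strand $s'$ in $S_1$, and by the convention $m_{ij} = \mathrm{wd}(s_j, s_i)$ the entry equals $\mathrm{wd}(s', s)$, the number of crossings where $s' \in S_1$ passes under $s \in S_2$. The layered hypothesis with $l=1 < m=2$ gives $\mathrm{wd}(s', s) = 0$ at once, so the entire block is zero, while the top-right block is simply whatever remains and is denoted $N$. I expect no real difficulty here: the argument is bookkeeping in the definitions of OU matrix, warping degree, and layeredness, and the only point needing care is matching the index convention with the direction of the inequality in the layered condition, which is precisely what places the zero block in the lower-left rather than the upper-right corner.
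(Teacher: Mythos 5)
Your proposal is correct and follows essentially the same route as the paper: identify the diagonal blocks as the OU matrices of the layers by restricting to crossings within $S_1$ or within $S_2$, and observe that the lower-left block counts crossings where a strand of $S_1$ passes under a strand of $S_2$, which vanish by the definition of layeredness. Your extra care in matching the index convention $m_{ij}=\mathrm{wd}(s_j,s_i)$ to the direction of the inequality $l<m$ is exactly the one point worth checking, and you check it correctly.
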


\begin{proof}
Let $\mathbf{s}=( s^1_1 , s^1_2 , \dots , s^1_{k_1} , s^2_1 , s^2_2 , \dots , s^2_{k_2})$ be a sequence of strands of $B$, where $s^1_1 , s^1_2 , \dots , s^1_{k_1} \in S_1$ and $s^2_1 , s^2_2 , \dots , s^2_{k_2} \in S_2$. 
The block $M_i$ represents the over/under relation between $s^i_1 , s^i_2 , \dots , s^i_{k_i}$ for $i=1,2$. 
The lower left block represents the number of crossings such that $s_m$ is over $s_l$ for all $s_m \in S_2$ and $s_l \in S_1$, which are all zero for $B= B_1 \oplus B_2$.  
\end{proof}
\medskip 

\noindent We prove Theorem \ref{thm-det-layer}. \\

\medskip 
\noindent {\it Proof of Theorem \ref{thm-det-layer}}. \ 
Let $B= B_1 \oplus B_2$ be a layered braid diagram with two layers $B_1$ and $B_2$ of the set of strands $S_1$ and $S_2$, respectively. 
Take a sequence of strands $\mathbf{s}$ same to Lemma \ref{lem-block}. 
By Lemma \ref{lem-block} and the well-known formula 
\begin{align*}
\mathrm{det}
\begin{pmatrix}
S & T \\
O & U 
\end{pmatrix}
=\mathrm{det} (S) \mathrm{det} (U),
\end{align*}
we have $\mathrm{det}(B)=\mathrm{det}(M(B_{\mathbf{s}}))= \mathrm{det}(M_1)\mathrm{det}(M_2)=\mathrm{det}(B_1)\mathrm{det}(B_2)$. \qed \\

\medskip 
\begin{cor}
When a braid diagram $B$ has a layer which consists of a single strand, $\mathrm{det}(B)=0$. 
\end{cor}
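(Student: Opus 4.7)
The plan is to reduce to Theorem \ref{thm-det-layer} by observing that a single-strand ``braid diagram'' contributes a $1 \times 1$ zero block to the OU matrix, after which the block-triangular determinant formula forces the product to vanish.

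First I would unpack what a single-strand layer contributes. If $B_j$ is a layer consisting of one strand, then $B_j$ is a $1$-braid diagram, and by the definition of the OU matrix in Subsection \ref{subs-def} its OU matrix is the $1 \times 1$ matrix whose only entry is on the main diagonal and therefore equals $0$. Hence $\mathrm{det}(B_j)=0$.

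Next I would reduce the multi-layer case to the two-layer case covered by Theorem \ref{thm-det-layer}. Write $B=B_1 \oplus B_2 \oplus \dots \oplus B_k$ with the distinguished single-strand layer being $B_j$. Grouping the layers on either side of $B_j$ yields a two-layer decomposition; more precisely, setting $A_1 = B_1 \oplus \dots \oplus B_{j-1}$ and $A_2 = B_j \oplus \dots \oplus B_k$ (with the obvious modification if $j=1$), the layered condition $\mathrm{wd}(s,s')=0$ for $s \in A_1$, $s' \in A_2$ still holds, so $B = A_1 \oplus A_2$. Applying Theorem \ref{thm-det-layer} gives $\mathrm{det}(B) = \mathrm{det}(A_1)\,\mathrm{det}(A_2)$, and applying it once more to $A_2 = B_j \oplus (B_{j+1} \oplus \dots \oplus B_k)$ isolates $B_j$ as a factor. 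Since $\mathrm{det}(B_j)=0$, the product collapses to $0$.

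There is essentially no obstacle here; the only mild subtlety is that Theorem \ref{thm-det-layer} is stated only for two layers, which is handled by the regrouping above (or, equivalently, by a short induction on $k$ showing $\mathrm{det}(B_1 \oplus \dots \oplus B_k) = \prod_{i=1}^k \mathrm{det}(B_i)$). Alternatively, one could prove the corollary directly from Lemma \ref{lem-block}: choose a sequence of strands $\mathbf{s}$ that places the single strand of $B_j$ first, so that the OU matrix has an entire zero first row, making the determinant vanish without invoking the product formula at all.
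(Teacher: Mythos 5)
Your main argument is correct and is essentially the paper's (implicit) one: the paper states this corollary without proof as an immediate consequence of Theorem \ref{thm-det-layer}, the point being exactly that a single-strand layer is a $1\times 1$ OU matrix $(0)$ with determinant $0$, and your regrouping into two layers legitimately extends the two-layer product formula to isolate that factor. One caution about your final ``alternatively'' remark, though: placing the single strand $s$ of $B_j$ first does \emph{not} in general give a zero first row. That row records the over-crossings of $s$ over the other strands, and by the definition of layeredness $s$ may well pass over strands lying in layers $B_{j+1},\dots,B_k$; only its entries in the columns of strands from earlier layers are forced to vanish (and dually, only the entries of its column coming from later layers vanish). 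So a whole zero row (resp.\ column) is guaranteed only when $j=k$ (resp.\ $j=1$); for a middle layer you genuinely need the block-triangular structure of Lemma \ref{lem-block}, i.e.\ the argument you gave first.
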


\section{Warping degree and OU matrix}
\label{section-wd-cm}

In Subsection \ref{subs-LOP}, we will discuss the warping degree problem in terms of the linear ordering problem and prove Theorem \ref{thm-det-wd}. 
In Subsection \ref{subs-weaving}, we will discuss the warping degree of weaving braids. 

\subsection{Linear ordering problem}
\label{subs-LOP}

For an $n \times n$ matrix $M$, the {\it linear ordering problem} is a permutation-based combinatorial optimization problem to find a simultaneous permutation of the rows and columns such that the sum of all the components below the main diagonal is minimized. 
The linear ordering problem has been studied for decades (\cite{CW}) in various fields, such as archeology, economics, sports tournaments, etc. 
For example, Leontief's input-output analysis (\cite{WL}) employs the ``input-output matrix,'' or IO matrix\footnote{The OU matrix is named after the IO matrix.}, to discuss bidirectional mutual relations between some groups. 
It was shown by Garey and Johnson that the linear ordering problem is an NP-hard problem (\cite{GJ}), and lots of strategies and effective algorithms to find optimal solutions have been studied.

In this subsection, we describe the warping degree of a braid diagram in terms of the linear ordering problem on OU matrices. 
Let $B$ be a braid diagram of $n$ strands, and $\mathbf{s}$ be the sequence of strands in the order associated to a permutation $\pi$. 
For the OU matrix $M(B_{\mathbf{s}}) = M_B ( \pi ) = (m_{ij})$, we define the objective function $f_B (\pi)$ to be the sum of all the components below the main diagonal, namely, 
$$f_B(\pi) = \sum_{1 \leq j \leq i \leq n} m_{ij}.$$

\medskip
\begin{prop}
For any braid diagram $B$ and permutation $\pi$, we have 
$$f_B(\pi) = \mathrm{wd}(B_{\mathbf{s}}) ,$$
where $\mathbf{s}$ is the sequence of strands of $B$ associated to $\pi$. 
\label{prop-f-wd}
\end{prop}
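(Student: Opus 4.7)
The plan is to prove Proposition \ref{prop-f-wd} by straightforward unpacking of the two definitions, because $f_B(\pi)$ and $\mathrm{wd}(B_{\mathbf{s}})$ are really two descriptions of the same count organized differently.

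First I would fix notation: let $\mathbf{s}=(t_1,t_2,\dots,t_n)$ be the sequence associated to $\pi$, so that $t_k = s_{\pi(k)}$ is the $k$th strand in $\mathbf{s}$. By the definition given in Subsection \ref{subs-def}, the $(i,j)$-component $m_{ij}$ of $M(B_{\mathbf{s}}) = M_B(\pi)$ counts the number of crossings at which $t_i$ passes over $t_j$. In particular $m_{ii}=0$, so the lower-triangular sum $f_B(\pi)=\sum_{1\leq j\leq i\leq n} m_{ij}$ is actually the strict lower-triangular sum $\sum_{1\leq j<i\leq n} m_{ij}$.

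Next I would rewrite $\mathrm{wd}(B_{\mathbf{s}})$ as a sum over ordered pairs. By definition in Section \ref{section-intro}, $\mathrm{wd}(B_{\mathbf{s}})$ counts crossings where a strand $t_j$ that is positioned ahead of another strand $t_i$ in $\mathbf{s}$ goes under $t_i$. Grouping this count by the ordered pair of strands yields
\[
\mathrm{wd}(B_{\mathbf{s}}) \;=\; \sum_{1\leq j<i\leq n} \bigl(\text{number of crossings where } t_j \text{ is under } t_i\bigr).
\]
Since ``$t_j$ is under $t_i$'' at a crossing is the same event as ``$t_i$ is over $t_j$'', each summand on the right is precisely $m_{ij}$, and the two sums agree term-by-term.

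There is no real obstacle here; the only thing to keep straight is the convention that the row index in $M_B(\pi)$ refers to the over-strand and the column index to the under-strand, which is why the strict lower-triangular sum (where the row index $i$ exceeds the column index $j$, meaning the over-strand $t_i$ appears later in $\mathbf{s}$ than the under-strand $t_j$) is exactly what the warping degree counts. Putting these identifications together concludes the proof.
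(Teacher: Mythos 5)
Your proof is correct and follows the same route as the paper's, which simply observes that both quantities count, by definition, the crossings where a strand earlier in $\mathbf{s}$ passes under a strand later in $\mathbf{s}$; your version just makes the index bookkeeping explicit. No issues.
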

\medskip

\begin{proof}
By definition, the objective function $f_B(\pi)$ represents the number of crossings such that the former strand is under and equivalently the latter strand is over according to the order of the strand permutation $\pi$, and it is accordant to the warping degree. 
\end{proof}
\medskip

\noindent From Proposition \ref{prop-f-wd}, we have 
$$\mathrm{wd}(B)= \min_{\pi} f_B(\pi).$$
Hence finding the value of $\mathrm{wd}(B)$ is equivalent to finding an optimal solution for the linear ordering problem on the OU matrix for given $B$. 
We prove Theorem \ref{thm-det-wd}, namely we show that $\mathrm{wd}(B) \neq 0$ if $\mathrm{det}(B) \neq 0$ for any braid diagram $B$. \\

\medskip
\noindent {\it Proof of Theorem \ref{thm-det-wd}.} \ 
When $\mathrm{wd}(B)=0$, we have $f_B( \pi )=0$ for some permutation $\pi$. 
Then $\mathrm{det}(B)=\mathrm{det}(M_B(\pi))=0$ since $M
_B(\pi)$ is a strictly upper triangular matrix. 
Hence, $\mathrm{wd}(B) \neq 0$ when $\mathrm{det}(B) \neq 0$
\qed \\
\medskip

\begin{ex}
Recall Example \ref{ex-5}. 
The braid diagram $B$ in Figure \ref{fig-5} has $\mathrm{det}(B)=1$. 
Therefore, $\mathrm{wd}(B) \neq 0$. 
\end{ex}
\medskip

\subsection{Weaving braids}
\label{subs-weaving}

A {\it weaving link} $W(p,q)$ is a closure of the braid 
$$B_W(p,q)= \left( \sigma_1 \sigma_2^{-1} \dots \sigma_{p-1}^{(-1)^p} \right)^q .$$
In this subsection, we determine the warping degree of the canonical diagram of $B_W(p,p)$, improving a result in \cite{ASA}. 
The following example was shown in \cite{ASA} to show that the warping degree depends on the order of strands. 

\medskip
\begin{ex}[\cite{ASA}]
The braid diagram $B=B_W(7,7)$ with strands $s_1, s_2 , \dots , s_7$ depicted in Figure \ref{fig-w77} has $\mathrm{wd}(B_{\mathbf{s}})=22$ and $\mathrm{wd}(B_{\mathbf{s}'})=12$ for $\mathbf{s}=(s_1 , s_2 , s_3 , s_4 , s_5 , s_6 , s_7)$ and $\mathbf{s}' =(s_1 , s_3 , s_5 , s_7 , s_2 , s_4 , s_6 )$.
\begin{figure}[ht]
\centering
\includegraphics[width=2.5cm]{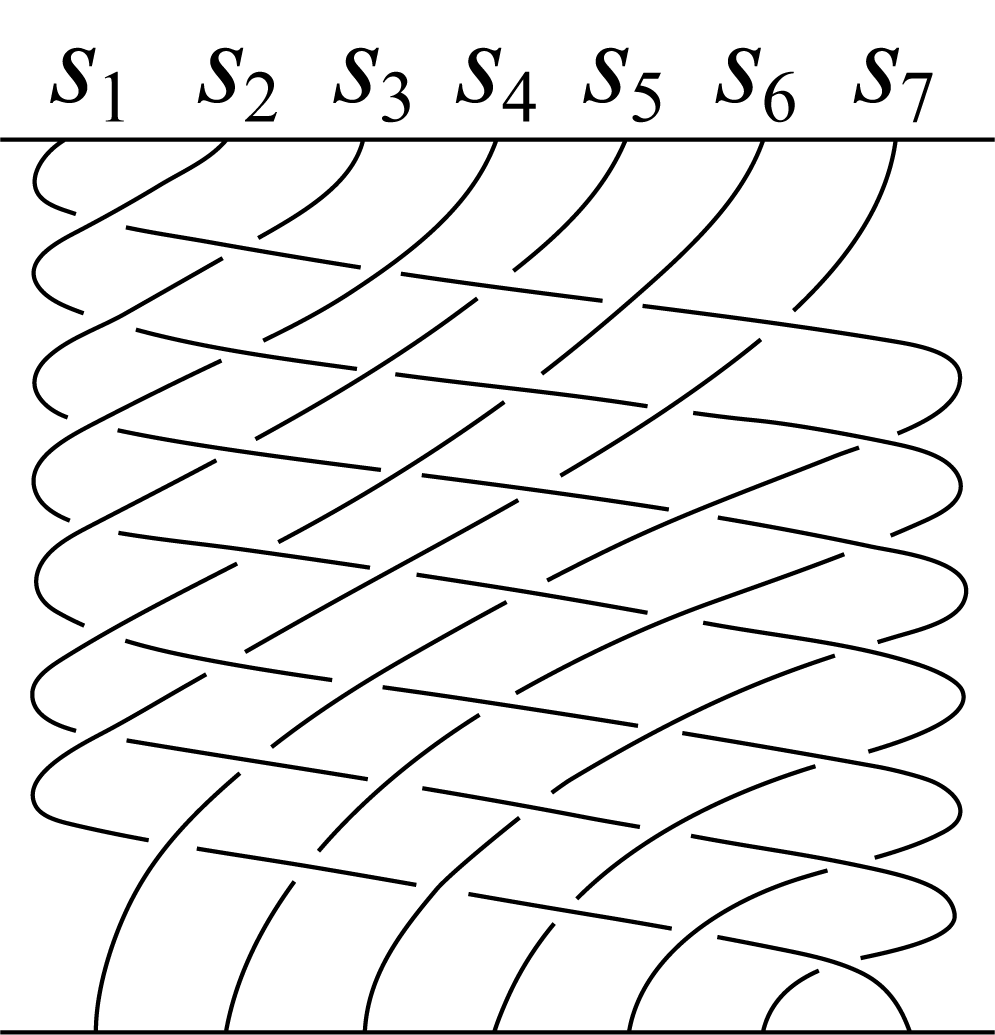}
\caption{$B_W(7,7)$.}
\label{fig-w77}
\end{figure}
\label{ex-w77}
\end{ex}
\medskip

\noindent In this section, we denote the canonical diagram of the braid $B_W(p,q)$ by the same notation $B_W(p,q)$. 
The following proposition was proved in \cite{ASA} to estimate the unknotting number of weaving knots.

\medskip
\begin{prop}[\cite{ASA}]
When $p$ is an odd number, the inequality $\mathrm{wd}(B_W(p,p))\leq \frac{p^2 -1}{4}$ holds. 
When $p$ is an even number, the equality $\mathrm{wd}(B_W(p,p)) = \frac{p(p-1)}{2}$ holds. 
\label{prop-w-in}
\end{prop}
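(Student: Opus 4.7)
My plan is to handle the two cases separately, in each case reducing to a hands-on pairwise analysis of the crossings in the canonical diagram $B_W(p,p)$.

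For the even case, the central observation is that when $p$ is even the OU matrix of $B_W(p,p)$ is symmetric: for every pair of strands $s_i$, $s_j$, the number of crossings at which $s_i$ passes over $s_j$ equals the number at which $s_j$ passes over $s_i$. I would establish this by tracking, for an arbitrary pair $\{s_i,s_j\}$, the sequence of encounters between them across the $p$ copies of the block $\sigma_1 \sigma_2^{-1} \sigma_3 \cdots \sigma_{p-1}^{(-1)^p}$. Because the signs alternate inside a block and there are an even number $p$ of blocks, the ``who is on top'' labels at the successive encounters of this fixed pair balance out. Once the OU matrix is known to be symmetric, for every strand order $\mathbf{s}$ the warping degree $\mathrm{wd}(B_{\mathbf{s}})$ equals the sum of strictly lower-triangular entries of the OU matrix, which is exactly half the sum of all off-diagonal entries. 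Since $B_W(p,p)$ has precisely $p(p-1)$ crossings, this forces $\mathrm{wd}(B_{\mathbf{s}}) = \tfrac{p(p-1)}{2}$ for every $\mathbf{s}$, which delivers the claimed equality as an upper and lower bound simultaneously.

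For the odd case, I need only the upper bound, which I would prove by exhibiting an explicit strand order $\mathbf{s}^{\ast}$ and computing $\mathrm{wd}(B_{\mathbf{s}^{\ast}})$ directly. The right choice of $\mathbf{s}^{\ast}$ depends on $p$: for $p=3$ the natural order already works, while for larger odd $p$ one can take the ``odd-indexed first, then even-indexed'' order from Example \ref{ex-w77}, namely $(s_1,s_3,\dots,s_p, s_2,s_4,\dots,s_{p-1})$. To evaluate the warping degree for this $\mathbf{s}^{\ast}$, I would use that each strand of $B_W(p,p)$ shifts one position to the left per iteration (wrapping around at the right edge), so two fixed strands $s_k$ and $s_l$ meet at a predictable sequence of positions across the $p$ iterations, and the over/under of each such meeting is determined by the parity of the generator index inside the block. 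Summing the pairwise contributions reduces to a combinatorial identity that evaluates to $\tfrac{(p-1)(p+1)}{4}$.

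The main obstacle throughout is the bookkeeping: one must correctly identify, among the $p(p-1)$ crossings of $B_W(p,p)$, which two strands meet at each and which is on top. In the even case this is tamed by the sign alternation together with the evenness of $p$, which produce the pairwise symmetry directly. The odd case is more delicate because that symmetry is broken; the bound $\tfrac{p^2-1}{4}$ is achieved only for a carefully chosen strand order, and verifying it requires grouping the crossings by the ``offset'' $k-l \pmod p$ between the two strands and evaluating the resulting arithmetic sum.
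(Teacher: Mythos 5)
The paper does not actually prove Proposition~\ref{prop-w-in}: it is quoted from \cite{ASA} without proof, so there is no internal argument to compare against. That said, your outline is sound and matches the ingredients the paper itself borrows from \cite{ASA} in Section~\ref{subs-weaving}: each pair of strands of $B_W(p,p)$ meets exactly twice (once in the block where each of the two strands is the one weaving across all the others), for odd $p$ both crossings are over-crossings of the same strand, and the order $(s_1,s_3,\dots,s_p,s_2,\dots,s_{p-1})$ yields the OU matrix of Example~\ref{ex-w77-m}, whose strictly lower-triangular sum is $2\sum_{e=1}^{(p-1)/2}e=\frac{p^2-1}{4}$. Your even case is also correct in structure: symmetry of the OU matrix makes $f_B(\pi)$ equal to half of the $p(p-1)$ crossings for every $\pi$, which pins down the warping degree exactly. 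Two cautions on the details you defer to ``bookkeeping.'' First, the balancing in the even case is not really caused by there being an even number of blocks; it comes from the parity of $p$ as the number of positions. If $s_k$ meets $s_l$ at generator index $d$ in the block where $s_k$ weaves, then $s_l$ meets $s_k$ at index $p-d$ in the block where $s_l$ weaves, and these two generators carry equal exponents $(\pm 1)$ exactly when $p$ is even; that is what forces one over-crossing for each strand of the pair (and, for odd $p$, forces both over-crossings onto one strand). Second, before invoking either parity argument you must establish that each pair meets exactly twice, which follows from the block permutation being a $p$-cycle so that every strand is the weaving strand in exactly one of the $p$ blocks. With those two points made precise, your plan closes the proof.
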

\medskip

\noindent We will improve the proposition considering the OU matrices, which have regular patterns for some certain sequences of strands as shown in the following example.

\medskip
\begin{ex}
The OU matrices of the braid diagram $B=B_W(7,7)$ in Example \ref{ex-w77} with $\mathbf{s}=(s_1 , s_2 , s_3 , s_4 , s_5 , s_6 , s_7)$ and $\mathbf{s}' =(s_1 , s_3 , s_5 , s_7 , s_2 , s_4 , s_6 )$ are
\begin{align*}
M(B_{\mathbf{s}})= 
\begin{pmatrix}
{\bf 0} & 0 & 2 & 0 & 2 & 0 & 2 \\
2 & {\bf 0} & 0 & 2 & 0 & 2 & 0 \\
0 & 2 & {\bf 0} & 0 & 2 & 0 & 2 \\
2 & 0 & 2 & {\bf 0} & 0 & 2 & 0 \\
0 & 2 & 0 & 2 & {\bf 0} & 0 & 2 \\
2 & 0 & 2 & 0 & 2 & {\bf 0} & 0 \\
0 & 2 & 0 & 2 & 0 & 2 & {\bf 0}
\end{pmatrix}
, \ M(B_{\mathbf{s}'})= 
\begin{pmatrix}
{\bf 0} & 2 & 2 & 2 & 0 & 0 & 0 \\
0 & {\bf 0} & 2 & 2 & 2 & 0 & 0 \\
0 & 0 & {\bf 0} & 2 & 2 & 2 & 0 \\
0 & 0 & 0 & {\bf 0} & 2 & 2 & 2 \\
2 & 0 & 0 & 0 & {\bf 0} & 2 & 2 \\
2 & 2 & 0 & 0 & 0 & {\bf 0} & 2 \\
2 & 2 & 2 & 0 & 0 & 0 & {\bf 0} 
\end{pmatrix}.
\end{align*}
\label{ex-w77-m}
\end{ex}
\medskip

\noindent The following proposition is an improvement of Proposition \ref{prop-w-in}. 

\medskip
\begin{prop}
When $p$ is an odd number, the equality $\mathrm{wd}(B_W(p,p))= \frac{p^2 -1}{4}$ holds. 
\end{prop}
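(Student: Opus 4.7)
The plan is to establish the matching lower bound $\mathrm{wd}(B_W(p,p)) \geq \frac{p^2-1}{4}$ for $p$ odd; combined with Proposition \ref{prop-w-in} this yields the equality. The substantive ingredient is a concrete description of the OU matrix of $B_W(p,p)$ (the pattern is already visible in Example \ref{ex-w77-m}); once that structure is in place, the lower bound is a short row-by-row counting argument. I expect the first step below to be the only place where real bookkeeping is required.

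First I would show that every entry of the OU matrix of $B_W(p,p)$ lies in $\{0,2\}$ and that each row contains exactly $k := \frac{p-1}{2}$ nonzero entries. Cutting $B_W(p,p)$ into its $p$ copies of $\sigma_1\sigma_2^{-1}\cdots\sigma_{p-1}^{-1}$, each copy cyclically shifts the strands one position to the left, so in the $\ell$-th copy the strand starting at position $1$ (namely $s_\ell$) sweeps from position $1$ to position $p$, meeting every other strand exactly once. The alternating exponents of $\sigma_1,\sigma_2^{-1},\ldots$ force a specific over/under pattern at each meeting, and the cyclic symmetry between layers then shows that the pair $\{s_i,s_j\}$ meets in exactly the two layers where $s_i$ and $s_j$ each do the sweeping, with the same strand on top in both meetings. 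A brief parity count shows moreover that each $s_i$ lies above exactly $k$ of the other strands, giving precisely $k$ nonzero entries per row of the OU matrix.

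Next, for an arbitrary permutation $\pi$ of the strands, I would bound $f_B(\pi)=\sum_{j<i} m_{ij}$ row by row. Since the strand at position $i$ is above exactly $k$ of the remaining $p-1$ strands, at most $k$ of the $i-1$ strands at positions $1,\dots,i-1$ can lie above it; the other at least $\max(0,\,i-1-k)$ earlier strands therefore lie below, each contributing $2$ to a below-diagonal entry of row $i$. Summing,
\begin{align*}
f_B(\pi)\;\geq\;\sum_{i=1}^{p}2\max(0,\,i-1-k)\;=\;2\sum_{j=1}^{k}j\;=\;k(k+1)\;=\;\frac{p^{2}-1}{4}.
\end{align*}
Taking the minimum over $\pi$ and invoking Proposition \ref{prop-f-wd} completes the lower bound and, together with Proposition \ref{prop-w-in}, the proof.
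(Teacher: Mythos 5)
Your proposal is correct and follows essentially the same route as the paper: establish that every off-diagonal entry of the OU matrix of $B_W(p,p)$ is $0$ or $2$ with exactly $\frac{p-1}{2}$ twos per line, deduce the lower bound $\frac{p^2-1}{4}$ on $f_B(\pi)$ by counting how many twos are forced below the diagonal, and combine with the upper bound of Proposition \ref{prop-w-in}. The only cosmetic differences are that you count by rows where the paper counts by columns, and that you sketch the $0/2$ structure directly from the block decomposition where the paper cites Proposition 2.1 of \cite{ASA}.
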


\begin{proof}
When $p$ is an odd number, each pair of strands of $B_W(p,p)$ has exactly two crossings that are both over-crossings of one of the two strands (Proposition 2.1 in \cite{ASA}). 
This means that each strand has either $0$ or $2$ over-crossings for the other strands. 
More precisely, each strand has the same number of strands which are over or under the strand since $B_W(p,p)$ is an alternating diagram. 
Hence, we have the under-crossing set
\begin{align*}
U= \{ \{ 0, 0, \dots , 0, 2, 2, \dots , 2\} , \{ 0, 0, \dots , 0, 2, 2, \dots , 2\} , \dots , \{ 0, 0, \dots , 0, 2, 2, \dots , 2\} \} , 
\end{align*}
which is a multiset of $p$ multisets, and each set is consisting of $(p+1)/2 (=(p-1)/2+1)$ of $0$'s and $(p-1)/2$ of $2$'s. 
This implies that every columns in a OU matrix must have $(p-1)/2$ of $2$'s for any order of strands. 
Therefore, even if one wants to mimimize $f_B$, the first $(p-2)/2$ columns cannot avoid having some $2$'s, more precisely $(p+1)/2-i$ of $2$'s for the $i$th column, below the main diagonal. 
Let $\pi = ( 1, 3, \dots , p, 2, 4, \dots , p-1)$. 
Then the OU matrix $M(\pi)=(m_{ij})$ satisfies
\begin{align*}
m_{ij}= \left\{
\begin{array}{ll}
2 & \left( \ i>j \text{ and } \frac{p+1}{2} \leq i-j \leq p-1, \text{ or } i<j \text{ and } 1 \leq j-i \leq \frac{p-1}{2} \ \right) \\
0 & ( \text{otherwise} )
\end{array}
\right.
\end{align*}
(see Example \ref{ex-w77-m} for $p=7$). 
Observe that $M(\pi)$ realizes the minimum value of $f$, considering each column. 
The warping degree with this order has been calculated in \cite{ASA} to be $(p^2-1)/4$. 
\end{proof}
\medskip

\section{Positive braids}
\label{section-pb}

A {\it positive braid diagram} is a braid diagram with all the crossings positive. 
A {\it positive braid} is a braid which has a positive braid diagram. 
In this section, we investigate the OU matrix of positive braids. 
In Subsection \ref{subsection-inv}, we show that the OU matrix is diagram-independent for positive braids and introduce some invariants for positive braids. 
In Subsections \ref{subsection-perm} and \ref{subsection-fund}, we show examples for positive permutation braids and fundamental braids, respectively.
In Subsection \ref{subsection-pure}, we discuss the OU matrix and the warping degree for positive pure braids.

\subsection{Invariants of positive braids}
\label{subsection-inv}

The following theorem was proved by Garside in \cite{Gp}. 

\medskip 
\begin{thm}[\cite{Gp}]
For any positive braid, any pair of positive braid diagrams are related by a finite sequence of the following moves.
\begin{align*}
\sigma_i \sigma_j \sigma_i = \sigma_j \sigma_i \sigma_j \ & \ (|j-i|=1) \\
\sigma_i \sigma_j = \sigma_j \sigma_i \ & \ (|j-i| \neq 1)
\end{align*}
\end{thm}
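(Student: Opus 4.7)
The plan is to work in the positive braid monoid $B_n^+$, defined abstractly by the presentation with generators $\sigma_1, \ldots, \sigma_{n-1}$ subject only to the two families of relations in the statement. The natural monoid homomorphism $\phi \colon B_n^+ \to B_n$ into the braid group is the identity on generators, and the theorem is equivalent to showing that $\phi$ is injective. I would prove this by embedding $B_n^+$ into a group of right fractions and then identifying that group with $B_n$.

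The main tool is Ore's theorem: a cancellative monoid in which any two elements admit a common right multiple embeds into its group of right fractions. For the common multiple property, introduce the fundamental braid
$$\Delta_n = (\sigma_1 \sigma_2 \cdots \sigma_{n-1})(\sigma_1 \sigma_2 \cdots \sigma_{n-2}) \cdots (\sigma_1 \sigma_2)(\sigma_1),$$
and verify, by direct manipulation using the braid relations, that for every generator $\sigma_i$ there exist $\eta_i, \eta_i' \in B_n^+$ with $\Delta_n = \sigma_i \eta_i = \eta_i' \sigma_i$. An induction on word length then shows that every positive braid left-divides (and right-divides) some power of $\Delta_n$, which supplies a common right multiple for any two elements.

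The harder step is cancellativity: $\alpha \beta = \alpha \gamma$ should force $\beta = \gamma$, and similarly on the right. My preferred approach is the word reversing process: given two positive words $u$ and $v$, systematically rewrite the formal expression $u^{-1} v$ into the form $v' (u')^{-1}$ by iterated application of the braid relations in a controlled direction. One then has to prove that this rewriting terminates and is confluent; termination requires a carefully chosen complexity measure (controlling total length and the number of pending reversals), while confluence follows from a local analysis of overlaps between the two types of relations. Termination plus confluence yields both cancellation laws simultaneously and, as a bonus, a decision procedure for equality of positive words. This termination/confluence analysis is, in my view, the technical heart of the argument and the main obstacle.

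Once cancellativity and common right multiples are both in hand, Ore's theorem provides an embedding $B_n^+ \hookrightarrow G$ into a group of right fractions. A final check shows that the images of the $\sigma_i$ generate $G$, satisfy the braid relations, and are invertible in $G$, so $G$ admits the standard braid group presentation; hence $G \cong B_n$ and $\phi$ is the restriction of this isomorphism. Consequently $\phi$ is injective, and any two positive diagrams of the same braid are related by a finite sequence of the two moves displayed in the theorem.
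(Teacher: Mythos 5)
The paper does not prove this statement at all: it is quoted from Garside's 1969 paper \cite{Gp} and used as a black box, so there is no internal proof to compare yours against. Your reformulation is the right one (two positive diagrams of the same braid give positive words equal in $B_n$, and being related by the listed moves is equality in the positive monoid $B_n^+$, so the theorem is the injectivity of $B_n^+ \to B_n$), and your architecture --- cancellativity plus common right multiples, Ore's criterion, identification of the group of fractions with $B_n$ --- is the standard and correct route; it is essentially Garside's own, with the word-reversing formulation of cancellativity being Dehornoy's later repackaging. The common-multiple half of the outline is sound: each $\sigma_i$ divides $\Delta_n$ on both sides, conjugation by $\Delta_n$ permutes the generators, and induction on word length shows every positive word of length $\ell$ divides $\Delta_n^{\ell}$.

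The gap is the one you yourself flag: cancellativity is asserted, not proven, and it is where the entire content of the theorem lives --- everything else in your outline is routine once it is in hand. Concretely you would need either (i) Garside's original argument (his Theorem H), a double induction on word length with a case analysis on $|i-j|$ showing that $\sigma_i u = \sigma_j v$ in $B_n^+$ forces $u$ and $v$ into the expected common form; or (ii) the word-reversing route, which requires verifying the cube condition for all triples of generators (a finite but genuinely nontrivial case check, since the complement of $\sigma_i$ and $\sigma_j$ has length $1$ or $2$ depending on $|i-j|$) together with a termination proof, which for the braid presentation is not free: naive reversing can lengthen words, and the usual termination arguments either bound the reversing diagram using $\Delta_n$ or exploit the specific combinatorics of the presentation. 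Until one of these is carried out, what you have is a correct plan rather than a proof; there is no logical error in it, but the deferred step is precisely where all of the difficulty of Garside's theorem resides.
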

\medskip

\noindent Namely, positive braid diagrams of the same positive braid are diagramatically related  only by the Reidemeister moves of type I\hspace{-0.5pt}I\hspace{-0.5pt}I.
The point is that there is no need for the Reidemeister move of type I\hspace{-0.5pt}I. 
Since the OU matrix is not affected by Reidemeister moves of type I\hspace{-0.5pt}I\hspace{-0.5pt}I, we have the following lemma. 

\medskip
\begin{lem}
For any pair of positive braid diagrams $B$ and $B'$ of the same positive braid, $M_B(\pi)=M_{B'}(\pi)$ holds for any $\pi$. 
\label{lem-pd}
\end{lem}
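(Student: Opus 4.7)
The plan is to invoke Garside's theorem (stated just above the lemma) to reduce the claim to checking invariance of the OU matrix under the two generating moves: the commutation relation $\sigma_i \sigma_j = \sigma_j \sigma_i$ with $|j-i|\neq 1$, and the braid relation $\sigma_i \sigma_{i+1} \sigma_i = \sigma_{i+1} \sigma_i \sigma_{i+1}$. Since strands in a braid are intrinsically identified by their endpoints, the labels $s_1,\ldots,s_n$ and hence the permutation $\pi$ refer to the same abstract strands in $B$ and $B'$, so it suffices to verify that each move leaves every entry $m_{ij}$ (the number of crossings where $s_{\pi(i)}$ is over $s_{\pi(j)}$) unchanged.

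For the commutation move, the two crossings involved sit on disjoint pairs of adjacent strands, and the move only swaps their vertical positions. No crossing is created or destroyed and no over/under datum is altered, so every $m_{ij}$ is unchanged. This case is trivial.

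The substantive check is the braid relation. Locally three strands are involved; call $a,b,c$ the strands entering the box from left to right. A direct bookkeeping on each side shows that both $\sigma_i\sigma_{i+1}\sigma_i$ and $\sigma_{i+1}\sigma_i\sigma_{i+1}$ produce exactly one crossing of each of the three pairs $\{a,b\}$, $\{a,c\}$, $\{b,c\}$, and in both cases the over/under assignment is the same: $a$ over $b$, $a$ over $c$, and $b$ over $c$ (this is the standard Reidemeister III over/under pattern for a triple of positive crossings). Consequently the contribution of these three strands to the OU matrix is identical on the two sides, while all other strands pass through the box without creating any crossings and so contribute identically as well. Thus $M_B(\pi)=M_{B'}(\pi)$ for each elementary move.

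Concatenating these invariances along any Garside sequence relating two positive diagrams of the same positive braid yields $M_B(\pi)=M_{B'}(\pi)$ for all $\pi$. The only mildly delicate step is the explicit strand tracking in the braid relation; beyond that the argument is essentially a reduction to the standard fact that Reidemeister III preserves pairwise over/under counts.
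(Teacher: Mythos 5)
Your proposal is correct and follows essentially the same route as the paper: the paper likewise invokes Garside's theorem and observes that the far-commutation move changes nothing and that the OU matrix is unaffected by the Reidemeister III move, though it leaves the strand-by-strand verification of the braid relation implicit where you carry it out explicitly. One minor caveat: with the paper's sign convention (at a positive crossing the strand entering from the left passes \emph{under}), the common over/under pattern on both sides of the braid relation is $b$ over $a$, $c$ over $a$, $c$ over $b$ rather than the one you wrote, but since both sides agree either way this does not affect the argument.
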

\medskip

\noindent We denote $M_B( \pi )$ for any positive braid diagram $B$ of a positive braid $\beta$ with a permutation $\pi$ by $M_{\beta}( \pi )$. 
We also denote $\mathrm{rank}(B)$, $\mathrm{det}(B)$ by $\mathrm{rank}(\beta)$, $\mathrm{det}( \beta )$, respectively for any positive braid diagram $B$ of a positive braid $\beta$. 
Similarly, we define the over- and under-crossing sets $O(\beta)$ and $U(\beta)$ of $\beta$ to be that of a positive braid diagram $B$ of $\beta$. 
By Lemma \ref{lem-pd} and Corollary \ref{cor-det}, we have the following corollary. 

\medskip 
\begin{cor}
For positive braids $\beta$, $\mathrm{rank}( \beta )$, $\mathrm{det}( \beta )$, $O(\beta)$, $U(\beta)$ are positive braid invariants. 
\label{prop-pbi}
\end{cor}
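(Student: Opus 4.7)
The plan is to assemble the corollary as a direct corollary of the two results cited in its statement, namely Lemma \ref{lem-pd} and Corollary \ref{cor-det} (together with Corollary \ref{prop-set} for the multisets $O$ and $U$). The strategy is the standard ``invariant under all choices'' argument: one must verify independence from both the choice of strand permutation $\pi$ (handled by the results of Subsection \ref{subs-sim}) and the choice of positive diagram representing $\beta$ (handled by the Garside-based Lemma \ref{lem-pd}).

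The first step is to fix a positive braid $\beta$ and any two positive braid diagrams $B$ and $B'$ of $\beta$. To show that $\mathrm{rank}(\beta)$ and $\mathrm{det}(\beta)$ are well defined, I would take any permutation $\pi$ and observe that by Corollary \ref{cor-det} the values $\mathrm{rank}\, M_B(\pi)$ and $\mathrm{det}\, M_B(\pi)$ do not depend on $\pi$, so they coincide with the values we would obtain via the identity permutation. Then by Lemma \ref{lem-pd} applied with $\pi=\mathrm{id}$, $M_B(\mathrm{id})=M_{B'}(\mathrm{id})$, so the rank and determinant computed from $B$ equal those computed from $B'$. Hence $\mathrm{rank}(\beta)$ and $\mathrm{det}(\beta)$ depend only on $\beta$.

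For the multisets $O(\beta)$ and $U(\beta)$, the same two-step argument applies: Corollary \ref{prop-set} gives permutation-independence of $O(B)$ and $U(B)$ for each fixed diagram, and Lemma \ref{lem-pd} gives $M_B(\pi)=M_{B'}(\pi)$, so in particular the row-multisets and column-multisets of $M_B(\pi)$ and $M_{B'}(\pi)$ agree. Therefore $O(B)=O(B')$ and $U(B)=U(B')$, which is exactly the claim that $O(\beta)$ and $U(\beta)$ are well defined on the level of positive braids.

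The main obstacle here is essentially conceptual rather than technical: all of the substantive work has already been done, first in Corollaries \ref{cor-det} and \ref{prop-set} (which rest on similarity of OU matrices under conjugation by transposition matrices) and then in Lemma \ref{lem-pd} (which in turn relies on Garside's theorem that positive diagrams of the same positive braid are related by type I\hspace{-0.5pt}I\hspace{-0.5pt}I Reidemeister moves only, and on the fact that such moves do not change OU matrix entries). One small thing to double-check while writing is that the OU matrix really is unaffected by a type I\hspace{-0.5pt}I\hspace{-0.5pt}I move at the level of entries — i.e., that the count of crossings where $s_i$ lies over $s_j$ is preserved — but this is immediate since the move neither creates nor destroys crossings and preserves which strand lies over which at each crossing.
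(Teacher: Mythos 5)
Your proposal is correct and matches the paper's own (implicit) argument exactly: the paper derives this corollary directly from Lemma \ref{lem-pd} (diagram-independence via Garside's theorem) combined with Corollary \ref{cor-det} and, for the multisets, Corollary \ref{prop-set} (permutation-independence). Your two-step ``independence from $\pi$ plus independence from the choice of positive diagram'' structure is precisely what the paper intends.
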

\medskip 

\begin{ex}
The positive braid $\beta$ illustrated in Figure \ref{fig-det-n} has $\mathrm{rank}(\beta)=3$, $\mathrm{det}(\beta)=2$, and 
\begin{align*} 
O(\beta)= & \{ \{ 0,0,1 \} , \{ 0,1,2 \} , \{ 0,1,2 \} \} \\ 
U(\beta)= & \{ \{ 0,0,2 \} , \{ 0,1,1 \} , \{ 0,1,2 \} \} .
\end{align*}
\begin{figure}[ht]
\centering
\includegraphics[width=3.5cm]{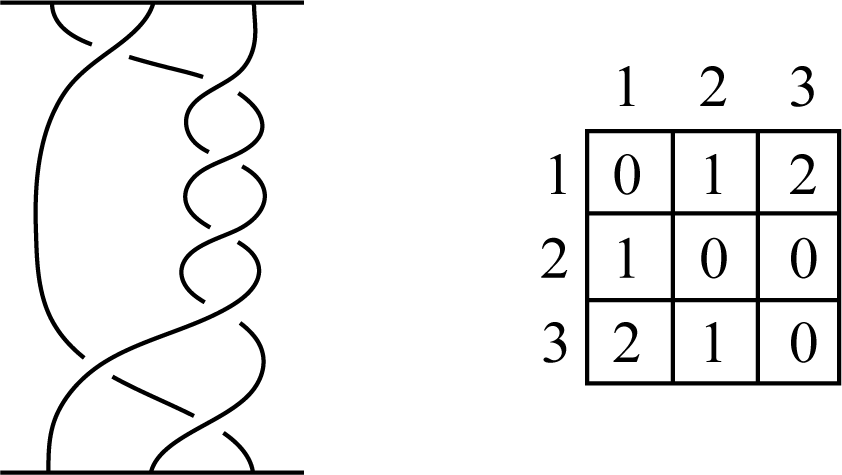}
\caption{A braid diagram $B=\sigma_1 \sigma_2^4 \sigma_1 \sigma_2$ of a positive braid $\beta$.}
\label{fig-det-n}
\end{figure}
\end{ex}

\begin{prop}
For any non-negative integer $n$, there exists a positive braid $\beta$ such that $\mathrm{det}(\beta)=n$.
\end{prop}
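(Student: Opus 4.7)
The plan is to exhibit an explicit one-parameter family of positive 3-braids whose OU matrices realize every non-negative determinant. Guided by the earlier example $\sigma_1 \sigma_2^4 \sigma_1 \sigma_2$ with $\mathrm{det}=2$, the natural candidate is
$$\beta_n \;=\; \sigma_1 \, \sigma_2^{2n} \, \sigma_1 \, \sigma_2, \qquad n \ge 0,$$
which is manifestly a positive 3-braid diagram. I would then simply read off its OU matrix by walking down the word and tracking which strand sits at each of the three positions after every generator.

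Label the three strands $s_1, s_2, s_3$ by their positions at the top. The opening $\sigma_1$ contributes one over-crossing involving $s_1$ and $s_2$ and swaps them, so the pre-block configuration in positions $1,2,3$ is $s_2, s_1, s_3$. Each consecutive pair of $\sigma_2$'s in the block $\sigma_2^{2n}$ then produces exactly one over-crossing of $s_1$ on top of $s_3$ together with one of $s_3$ on top of $s_1$, while restoring the positions. Hence, after $\sigma_2^{2n}$, the entries $m_{13}$ and $m_{31}$ have each accumulated exactly $n$, and the strand order is back to $s_2, s_1, s_3$. The final $\sigma_1 \sigma_2$ contributes a single over-crossing for each of two more off-diagonal entries (involving the $s_1, s_2$ pair and the $s_2, s_3$ pair). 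Assembling these counts yields a $3 \times 3$ matrix in which the only nonzero entries are the two $n$'s at positions $(1,3)$ and $(3,1)$, together with three $1$'s in the remaining off-diagonal slots.

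Computing the determinant by cofactor expansion along the column (or row) that contains $n$ as its unique large entry and otherwise only $0$'s and a single $1$ gives $\mathrm{det}(\beta_n)=n$ at once; this recovers the value $2$ for the example $\beta_2 = \sigma_1 \sigma_2^4 \sigma_1 \sigma_2$ as a sanity check. In the degenerate case $n=0$ the braid $\beta_0 = \sigma_1^2 \sigma_2$ still fits the formula, its OU matrix having an all-zero column and hence determinant $0$.

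The only thing to be careful about is the bookkeeping of which strand occupies which position after each generator, and matching the paper's sign convention for positive crossings so that the computed entries $m_{ij}$ agree with those in the reference example. Beyond that there is no obstacle; by Proposition \ref{prop-pbi} the resulting $\mathrm{det}(\beta_n)$ is a well-defined invariant of the positive braid $\beta_n$, so the one explicit diagram computation suffices.
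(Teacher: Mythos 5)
Your proposal is correct and is essentially identical to the paper's proof: the paper exhibits the very same family $B=\sigma_1\sigma_2^{2n}\sigma_1\sigma_2$, writes down the OU matrix $M_B(id)$ with entries $m_{13}=m_{31}=n$, $m_{12}=m_{21}=m_{32}=1$, $m_{23}=0$, and reads off $\mathrm{det}=n$. Your bookkeeping of which strand is over at each crossing matches the paper's convention (as your $n=2$ sanity check confirms), so nothing is missing.
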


\begin{proof}
When $B = \sigma_1 \sigma_2^{2n} \sigma_1 \sigma_2$, we have 
\begin{align*}
M_B(id)=
\begin{pmatrix}
0 & 1 & n \\
1 & 0 & 0 \\
n & 1 & 0
\end{pmatrix}
\end{align*}
and $\mathrm{det}(\sigma_1 \sigma_2^{2n} \sigma_1 \sigma_2)=n$ (see Figure \ref{fig-det-n} for $n=2$). 
\end{proof}

\subsection{Positive permutation braid}
\label{subsection-perm}

A {\it positive permutation braid} is a positive braid such that each pair of strands has at most one crossing. 
There is one to one correspondence from positive permutation $n$-braids to permutations on $(1,2, \dots , n)$ by taking their braid permutations $\rho$, and positive permutation braids have been used in the studies of normal form of braids (see \cite{Artin, Gp, th}). 

\medskip 
\begin{lem}
The OU matrix $M_B (id)$ of a positive permutation braid $\beta$ is a strictly lower triangular matrix in which each component below the main diagonal is either $0$ or $1$. 
\label{lem-l-tri}
\end{lem}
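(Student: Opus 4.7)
The plan is to establish two assertions: every entry of $M_B(id)$ lies in $\{0,1\}$, and every entry on or above the main diagonal is zero.

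The first assertion is immediate from the defining property of a positive permutation braid: since any two strands meet in at most one crossing, the number of times $s_i$ passes over $s_j$ is at most $1$. Entries on the diagonal are $0$ by the definition of the OU matrix in Subsection \ref{subs-def}.

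For the strictly lower triangular claim, I would fix indices $i<j$ and show that the $(i,j)$-entry, which counts crossings at which $s_i$ lies over $s_j$, equals $0$. The key geometric observation is that in a braid diagram the left-to-right order of two strands changes only at a crossing between those two strands. Consequently, in a positive permutation braid---where $s_i$ and $s_j$ meet at most once---the two strands keep their initial relative order (with $s_i$ to the left of $s_j$) right up to the instant of their unique crossing, if any. At such a crossing $s_i$ occupies some position $k$ and $s_j$ occupies position $k+1$; since the braid is positive, the crossing is $\sigma_k$. By the sign convention used in the paper (as can be checked against the OU matrix computation for $\sigma_1\sigma_2^{2n}\sigma_1\sigma_2$ in Subsection \ref{subsection-inv}), in the positive crossing $\sigma_k$ the strand arriving from the right passes over the strand arriving from the left. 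Hence $s_j$ lies over $s_i$ at this crossing, so $s_i$ is never over $s_j$, and the $(i,j)$-entry vanishes.

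Combining these two observations yields the lemma. The only subtle step is invoking the crossing-sign convention to conclude that the right-hand strand $s_j$ passes over the left-hand strand $s_i$; the rest is a direct unpacking of the definitions of a positive permutation braid and of the OU matrix.
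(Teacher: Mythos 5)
Your proof is correct and follows essentially the same route as the paper's: the at-most-one-crossing condition bounds the entries by $1$, and for $i<j$ the strands retain their initial left-to-right order up to their unique crossing, where positivity forces $s_j$ to pass over $s_i$, killing the $(i,j)$-entry. The paper states this more tersely (asserting directly that $s_i$ passes under $s_j$ at any crossing), while you spell out the order-preservation step and verify the sign convention; the content is the same.
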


\begin{proof}
When a pair of strands $s_i$ and $s_j$ $(i<j)$ has a crossing, $s_i$ passes under $s_j$, and equivalently $s_j$ passes over $s_i$ at the crossing. 
Hence the $(i,j)$-component of $M_B (id)$ is $0$ and the $(j,i)$-component is either $1$ or $0$ for any pair of $i<j$. 
\end{proof}
\medskip

\noindent We have the following corollary from Lemma \ref{lem-l-tri}. 

\medskip
\begin{cor}
Every positive permutation braid $\beta$ has $\mathrm{det}(\beta)=0$. 
\end{cor}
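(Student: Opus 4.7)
The plan is to read off the result directly from Lemma \ref{lem-l-tri} together with the standard fact that the determinant of a triangular matrix equals the product of its diagonal entries. There is essentially no work beyond invoking what has already been established.

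First, I would note that by Lemma \ref{lem-l-tri}, the OU matrix $M_B(id)$ of any positive permutation braid diagram $B$ is strictly lower triangular. In particular, every diagonal entry is zero (this is already built into the definition of the OU matrix, since $m_{ii}$ counts self-crossings, but strict lower-triangularity emphasizes it). The determinant of a triangular matrix is the product of the diagonal entries, so
\[
\mathrm{det}(M_B(id)) \;=\; \prod_{i=1}^{n} 0 \;=\; 0.
\]

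Second, I would justify passing from this particular permutation to the invariant $\mathrm{det}(\beta)$. By Corollary \ref{cor-det}, the determinant of the OU matrix does not depend on the strand permutation $\pi$, and by Corollary \ref{prop-pbi} it does not depend on the choice of positive braid diagram representing $\beta$. Hence $\mathrm{det}(\beta) = \mathrm{det}(M_B(id)) = 0$, which is exactly the claim.

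There is no real obstacle here: the substantive content is already packaged in Lemma \ref{lem-l-tri}, where one uses that any pair of strands of a positive permutation braid crosses at most once and always with the lower-indexed strand going under. The corollary is essentially a one-line observation, and the only thing to be careful about in the write-up is to cite Lemma \ref{lem-l-tri} and the invariance corollaries rather than re-deriving them.
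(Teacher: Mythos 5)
Your proposal is correct and matches the paper's (implicit) argument exactly: the paper states this as an immediate corollary of Lemma \ref{lem-l-tri}, relying on the fact that a strictly lower triangular matrix has zero determinant, together with the permutation- and diagram-independence of $\mathrm{det}$. Nothing further is needed.
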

\medskip 

\noindent For the warping degree, we have the following corollary. 

\begin{cor}
Every positive braid diagram $B$ of a positive permutation braid $\beta$ has $\mathrm{wd}(B)=0$, namely $B$ is completely layered. 
\label{cor-perm-mon}
\end{cor}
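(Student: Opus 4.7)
The plan is to chase the definitions through the two structural lemmas already proven in the excerpt, namely Lemma \ref{lem-l-tri} for positive permutation braids and Lemma \ref{lem-pi-inv} for the reverse of a strand permutation, together with Proposition \ref{prop-f-wd} which identifies the objective function $f_B$ with the warping degree of the associated sequence.

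First I would apply Lemma \ref{lem-l-tri}: for a positive permutation braid $\beta$ and any positive braid diagram $B$ of $\beta$, the matrix $M_B(id)$ is strictly lower triangular, so all entries on or above the main diagonal vanish. This is the starting point.

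Next I would pass to the reverse permutation $\pi' = (n, n-1, \dots, 1)$. By Lemma \ref{lem-pi-inv}, $M_B(\pi')$ is obtained from $M_B(id)$ by a $180^\circ$ rotation: the $(i,j)$-component of $M_B(\pi')$ equals the $(n+1-i,n+1-j)$-component of $M_B(id)$. A $180^\circ$ rotation sends the strict lower triangle to the strict upper triangle, so $M_B(\pi')$ is strictly upper triangular. In particular every entry below (and on) its main diagonal is $0$, so $f_B(\pi') = \sum_{j \le i} m_{ij} = 0$.

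Finally, by Proposition \ref{prop-f-wd} we have $f_B(\pi') = \mathrm{wd}(B_{\mathbf{s}'})$, where $\mathbf{s}'$ is the sequence of strands associated to $\pi'$. Since warping degrees are non-negative and $\mathrm{wd}(B) = \min_\pi f_B(\pi)$, we conclude $\mathrm{wd}(B) = 0$. By the definition recalled in Section \ref{section-intro}, $B$ is then completely layered, as desired. The proof is almost entirely bookkeeping once one notices that reversing the order of strands converts the lower-triangular OU matrix of a positive permutation braid into an upper-triangular one; there is no real obstacle beyond correctly invoking the reverse-permutation lemma.
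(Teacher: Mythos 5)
Your proof is correct and follows exactly the paper's argument: the paper likewise takes $\pi=(n,n-1,\dots,2,1)$, invokes Lemmas \ref{lem-pi-inv} and \ref{lem-l-tri} to conclude $M_B(\pi)$ is strictly upper triangular, and deduces $f_B(\pi)=0$, hence $\mathrm{wd}(B)=0$. Your write-up just spells out the same steps in slightly more detail.
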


\begin{proof}
Let $\pi = ( n, n-1, \dots , 2, 1 )$. 
Then $M_B (\pi)$ is a strictly upper triangular matrix by Lemmas \ref{lem-pi-inv} and \ref{lem-l-tri}. 
Hence, we have the objective function $f_B( \pi )=0$, and therefore $\mathrm{wd}(B)=0$. 
\end{proof}
\medskip

\subsection{Fundamental braid}
\label{subsection-fund}

A {\it fundamental braid}, denoted by $\Delta$, is a positive permutation braid in which each pair of strands has exactly one crossing. 

\medskip
\begin{lem}
The OU matrix $M_{\Delta}(id)$ of a fundamental braid $\Delta$ is a matrix $(m_{ij})$ such that 
\begin{align}
m_{ij}= 
\left\{
\begin{array}{ll}
1 & (i>j) \\
0 & (i \leq j).
\end{array}
\right.
\label{formula-md}
\end{align}
\label{lem-md}
\end{lem}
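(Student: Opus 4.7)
The plan is to deduce the lemma directly from Lemma \ref{lem-l-tri} together with the defining property of the fundamental braid.

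First, since $\Delta$ is a positive permutation braid, Lemma \ref{lem-l-tri} applies: the OU matrix $M_{\Delta}(id) = (m_{ij})$ is strictly lower triangular with each below-diagonal entry equal to $0$ or $1$. This already handles the ``$i \leq j$'' case of the formula \eqref{formula-md}: the diagonal entries vanish by definition of the OU matrix, and the strictly upper triangular entries vanish because, for $i < j$ in a positive permutation braid, whenever $s_i$ and $s_j$ cross it is $s_j$ that goes over $s_i$, so $m_{ij}$ counts no over-crossings of $s_i$ over $s_j$.

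The only remaining task is the ``$i > j$'' case, where I must rule out the entry $0$. Here I use that, by definition of a fundamental braid, every pair of strands has \emph{exactly one} crossing. For fixed $i > j$, rename the pair as $s_j, s_i$ with $j < i$. The same case analysis used in the proof of Lemma \ref{lem-l-tri} shows that this single crossing contributes $+1$ to the $(i,j)$-component (the larger-indexed strand $s_i$ passes over $s_j$) and $0$ to the $(j,i)$-component. Hence $m_{ij} = 1$, giving the remaining case of \eqref{formula-md}.

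There is essentially no obstacle: both the structural shape (strict lower triangularity with $0/1$ entries) and the exact-crossing-count are already available from the previous lemma and the definition, so the proof amounts to combining these two facts.
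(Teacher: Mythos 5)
Your proof is correct and follows essentially the same route as the paper: the paper's one-sentence proof likewise observes that each pair $s_i, s_j$ with $i<j$ has exactly one crossing at which $s_j$ passes over $s_i$, which forces the below-diagonal entries to be $1$ and the rest to be $0$. Your only difference is that you explicitly route the triangular shape through Lemma \ref{lem-l-tri} rather than restating the over/under observation inline, which is a harmless reorganization.
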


\begin{proof}
For each pair of strands $s_i$ and $s_j$ $(i<j)$, $s_j$ passes over $s_i$ as $\Delta$ is a positive braid. 
\end{proof}
\medskip 

\noindent We denote the matrix $(m_{ij})$ of (\ref{formula-md}) by $D$ in this section. 
We have the following corollary. 

\medskip 
\begin{cor}
Let $\Delta$ be the fundamental braid of $n$ strands. 
Then, $\mathrm{rank}(\Delta)=n-1$.
\end{cor}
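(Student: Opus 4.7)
The plan is to read off the rank of $D$ directly from its shape, using the explicit description in Lemma \ref{lem-md}. The matrix $D$ is strictly lower triangular with $1$'s strictly below the diagonal and $0$'s elsewhere, so its first row is identically zero. This immediately gives the upper bound $\mathrm{rank}(\Delta) = \mathrm{rank}(D) \leq n-1$.

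For the matching lower bound, I would exhibit $n-1$ linearly independent rows. The cleanest way is to look at rows $2, 3, \dots, n$ of $D$ restricted to columns $1, 2, \dots, n-1$: by the formula in Lemma \ref{lem-md}, this $(n-1) \times (n-1)$ submatrix has $(i,j)$-entry equal to $1$ if $i \geq j$ and $0$ if $i < j$ (after the shift of row index), i.e.\ it is lower triangular with all diagonal entries equal to $1$. Its determinant is therefore $1 \neq 0$, so those $n-1$ rows of $D$ are linearly independent, forcing $\mathrm{rank}(D) \geq n-1$.

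Combining the two bounds yields $\mathrm{rank}(\Delta) = n-1$. There is essentially no obstacle here beyond making the triangular-submatrix argument precise; one could equivalently perform the row operations $R_{i+1} \mapsto R_{i+1} - R_i$ for $i = 1, \dots, n-1$, which transforms the nonzero rows into the standard basis vectors $e_1, e_2, \dots, e_{n-1}$, making linear independence manifest.
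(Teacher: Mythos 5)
Your proof is correct and follows the route the paper intends: the corollary is stated without proof as an immediate consequence of the explicit form of $D$ in Lemma \ref{lem-md}, and your argument (zero first row for the upper bound, the unitriangular $(n-1)\times(n-1)$ submatrix on rows $2,\dots,n$ and columns $1,\dots,n-1$ for the lower bound) supplies exactly the details being left to the reader. No issues.
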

\medskip 

\noindent For braid products, we have the following proposition. 

\medskip 
\begin{prop}
When $r$ is an odd number, $M_{{\Delta}^r}(id)$ is a matrix $(m_{ij})$ such that 
\begin{align*}
m_{ij}= 
\left\{
\begin{array}{cc}
\lfloor \frac{r}{2} \rfloor & (i<j) \\
0 & (i=j) \\
\lceil \frac{r}{2} \rceil & (i>j),
\end{array}
\right.
\end{align*}
namely, $M_{{\Delta}^r}(id)= \lceil \frac{r}{2} \rceil D + \lfloor \frac{r}{2} \rfloor D^T$. 
When $r$ is an even number, $M_{{\Delta}^r}(id)$ is a matrix $( n_{ij})$ such that 
\begin{align*}
n_{ij}= 
\left\{
\begin{array}{cc}
\frac{r}{2}  & (i \neq j) \\
0 & (i=j), 
\end{array}
\right.
\end{align*}
namely, $M_{{\Delta}^r}(id)=\frac{r}{2}D+\frac{r}{2}D^T$. 
\label{prop-dr}
\end{prop}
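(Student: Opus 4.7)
The plan is to leverage Proposition~\ref{prop-product} iteratively, keeping careful track of the braid permutation of $\Delta^k$ at each step. The key auxiliary fact is that the braid permutation of $\Delta$ is the reversal $\rho_\Delta = (n, n-1, \dots, 1)$, so $\rho_{\Delta^r} = \rho_\Delta^r$ equals $\rho_\Delta$ when $r$ is odd and $id$ when $r$ is even.

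First I would establish the two building blocks $M_\Delta(id)$ and $M_\Delta(\rho_\Delta)$. The first is just $D$ by Lemma~\ref{lem-md}. For the second, I would apply Lemma~\ref{lem-pi-inv} with $\pi = id$ and ${\pi}' = \rho_\Delta$ to get
\begin{equation*}
M_\Delta(\rho_\Delta) = I' \, M_\Delta(id) \, I' = I' D I',
\end{equation*}
and then observe that conjugation by $I'$ sends $(i,j)$ to $(n+1-i,\, n+1-j)$, so $I' D I'$ has a $1$ exactly when $n+1-i > n+1-j$, i.e.\ when $i < j$. Thus $M_\Delta(\rho_\Delta) = D^T$.

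Next I would apply Proposition~\ref{prop-product} to $BC = \Delta^{r-1} \cdot \Delta$ with $\pi = id$, yielding the recursion
\begin{equation*}
M_{\Delta^r}(id) = M_{\Delta^{r-1}}(id) + M_\Delta(\rho_{\Delta^{r-1}}).
\end{equation*}
By the parity observation above, the second term is $D$ when $r-1$ is even (i.e.\ $r$ is odd) and $D^T$ when $r-1$ is odd (i.e.\ $r$ is even). So the induction step adds $D$ and $D^T$ alternately as $r$ increases.

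I would finish by a straightforward induction on $r$: assuming $M_{\Delta^{r-1}}(id) = \lceil (r-1)/2\rceil D + \lfloor (r-1)/2\rfloor D^T$, the recursion gives the claimed expression for $M_{\Delta^r}(id)$, splitting on the parity of $r$. The entrywise description in the proposition then follows since $D$ contributes $1$ below the diagonal and $D^T$ contributes $1$ above. There is no real obstacle here; the only thing requiring care is not conflating the strand permutation $\pi$ with the braid permutation $\rho$ when iterating Proposition~\ref{prop-product}, and correctly identifying $\rho_\Delta$ as the reversal so that Lemma~\ref{lem-pi-inv} applies directly.
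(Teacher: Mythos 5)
Your proposal is correct and follows essentially the same route as the paper: both rest on iterating Proposition~\ref{prop-product} together with the fact that the braid permutation of $\Delta^k$ alternates between the reversal and the identity (Lemma~\ref{lem-dp}), so that each factor of $\Delta$ contributes $D$ or $D^T$ alternately. The only cosmetic differences are that you step the induction by $1$ and handle both parities in one recursion (the paper steps by $2$ for odd $r$ and says the even case is similar), and that you explicitly justify $M_\Delta(\rho_\Delta)=D^T$ via Lemma~\ref{lem-pi-inv}, which the paper leaves implicit.
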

\medskip 

\noindent To prove Proposition \ref{prop-dr}, we see the braid permutation of $\Delta^r$. 

\medskip
\begin{lem}
When $r$ is an odd number, the braid permutation of $\Delta^r$ is $\rho = (n, n-1, \dots , 2, 1)$. 
When $r$ is an even number, the braid permutation of $\Delta^r$ is $id$. 
\label{lem-dp}
\end{lem}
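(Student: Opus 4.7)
The plan is to identify the braid permutation of $\Delta$ first, and then leverage the compositional behavior of braid permutations together with the fact that the identified permutation is an involution.

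First I would show that $\rho_\Delta = (n, n-1, \dots, 2, 1)$. Recall from Subsection \ref{subsection-perm} that positive permutation $n$-braids correspond bijectively to permutations on $(1,2,\dots,n)$ via their braid permutations. Under this correspondence, the number of crossings of a positive permutation braid equals the number of inversions of the associated permutation: indeed, strands $s_i$ and $s_j$ with $i<j$ cross (exactly once) in the braid if and only if their endpoints at the bottom lie in the swapped order, i.e., $\rho(i)>\rho(j)$. Since $\Delta$ is the positive permutation braid in which every pair of strands crosses exactly once, $\rho_\Delta$ has $\binom{n}{2}$ inversions, which forces $\rho_\Delta(i) = n+1-i$ for all $i$.

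Next I would invoke the composition rule for braid permutations used in the proof of Proposition \ref{prop-product}: the braid permutation of a product $BC$ satisfies $\rho_{BC} = \rho_C \circ \rho_B$. Applying this iteratively gives $\rho_{\Delta^r} = \rho_\Delta^{\,r}$. Since $\rho_\Delta$ sends $i \mapsto n+1-i$, it is an involution, so $\rho_\Delta^{\,r} = \rho_\Delta$ when $r$ is odd and $\rho_\Delta^{\,r} = id$ when $r$ is even, which is exactly the stated dichotomy. A clean way to present this is by induction on $r$, with the involution property supplying the alternating pattern.

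The only mildly delicate step is the first one, the identification $\rho_\Delta = w_0$, because it requires noting both that crossings correspond to inversions (which uses positivity and the "at most one crossing per pair" property of positive permutation braids) and that $w_0$ is the unique permutation on $n$ letters with $\binom{n}{2}$ inversions. Everything after that is formal, so I do not expect any real obstacle beyond making that first observation precise.
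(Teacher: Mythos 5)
Your proof is correct and complete. Note that the paper states Lemma \ref{lem-dp} without any proof at all, so there is no argument to compare against; your route --- identifying $\rho_\Delta(i)=n+1-i$ via the crossing--inversion correspondence for positive permutation braids, then composing and using that this permutation is an involution --- is a natural and fully rigorous justification, and the composition rule $\rho_{BC}=\rho_C\circ\rho_B$ you invoke is indeed consistent with the paper's usage in the proof of Proposition \ref{prop-product}. The one step worth spelling out carefully in writing is the first: that in a positive permutation braid two strands cross exactly once if and only if their bottom endpoints are inverted, so that $\Delta$ having all $\binom{n}{2}$ crossings forces $\rho_\Delta$ to be the order-reversing permutation.
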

\medskip

\noindent Now we prove  Proposition \ref{prop-dr}. 

\medskip 
\noindent {\it Proof of Proposition \ref{prop-dr}.} \ 
Let $B$ be a positive braid diagram of $\Delta$. 
When $r$ is an odd number, we prove that $M_{B^r}(id)= \lceil \frac{r}{2} \rceil D + \lfloor \frac{r}{2} \rfloor D^T$ by an induction. 
When $r=1$, we have $M_B(id)=D$ by Lemma \ref{lem-md}. 
Let $\rho =(n, n-1, \dots , 2, 1)$. 
Assume the equality holds for an odd number $k$. 
When $r=k+2$, 
\begin{align*}
M_{B^{k+2}}(id) = M_{B^{k+1} B}(id) = M_{B^{k+1}}(id) + M_{B}(id) 
\end{align*}
by Proposition \ref{prop-product} and Lemma \ref{lem-dp}. This equals
\begin{align*}
& M_{B^k B}(id) + M_B (id) \\
= & M_{B^k}(id) + M_B (\rho ) + M_B (id)\\
= & \left\lceil \frac{k}{2} \right\rceil D + \left\lfloor \frac{k}{2} \right\rfloor D^T + D^T +D \\
= & \left\lceil \frac{k+2}{2} \right\rceil D + \left\lfloor \frac{k+2}{2} \right\rfloor D^T.
\end{align*}
Hence $M_{B^r}(id)= \lceil \frac{r}{2} \rceil D + \lfloor \frac{r}{2} \rfloor D^T$ holds when $r=k+2$, too. 
Therefore $M_{{\Delta}^r}(id)= \lceil \frac{r}{2} \rceil D + \lfloor \frac{r}{2} \rfloor D^T$ holds. 
It similarly holds when $r$ is even. 
\qed

\subsection{Positive pure braid}
\label{subsection-pure}

In this subsection, we show that the OU matrix $M_{\beta}(\pi)$ of a positive pure braid $\beta$ is a symmetric matrix for any strand permutation $\pi$. 

\medskip 
\begin{prop}
The OU matrix $M_{\beta}(\pi)$ of a positive pure braid $\beta$ is a symmetric matrix for any $\pi$. 
\label{lem-symm}
\end{prop}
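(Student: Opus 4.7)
The plan is to reduce the statement to the two-strand case by extracting pairs of strands from the braid. First, it suffices to prove symmetry for the single permutation $\pi = id$. Indeed, by Lemma \ref{lem-pd} the OU matrix depends only on $\beta$, and by Lemma \ref{lem-I} changing $\pi$ by a transposition conjugates $M_\beta(\pi)$ by an elementary permutation matrix $I_{kl}$; this operation swaps the $k$th and $l$th rows and the $k$th and $l$th columns simultaneously, so it preserves symmetry. Since every permutation is a product of transpositions, it is enough to show that $M_\beta(id)$ is symmetric.

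Fix a positive diagram $B$ of $\beta$ and a pair $i \neq j$. The entries $m_{ij}$ and $m_{ji}$ of $M_\beta(id)$ count the crossings of $B$ at which $s_i$ passes over $s_j$ and at which $s_j$ passes over $s_i$, respectively. Delete from $B$ every strand other than $s_i$ and $s_j$; the resulting $2$-strand diagram $B_{ij}$ has exactly the crossings of $B$ that involve both $s_i$ and $s_j$, with their over/under data preserved. Since every crossing of $B$ is positive, every crossing of $B_{ij}$ is positive, and since $\beta$ is pure, $s_i$ and $s_j$ return to their starting positions in $B_{ij}$, so the underlying $2$-braid is positive and pure. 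As the braid group on two strands is generated by $\sigma_1$ and pureness forces the exponent to be even, $B_{ij}$ must represent $\sigma_1^{2k}$ for some integer $k \geq 0$.

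Finally, in $\sigma_1^{2k}$ the two strands swap at every crossing, so the identity of the over-strand alternates from one crossing to the next; of the $2k$ crossings, exactly $k$ have $s_i$ over $s_j$ and the remaining $k$ have $s_j$ over $s_i$. Hence $m_{ij} = m_{ji} = k$, which yields the required symmetry.

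The point requiring the most care is the strand-extraction step: one must verify that removing the remaining strands from a positive pure braid diagram really produces a $2$-braid diagram in which the crossings of $s_i$ with $s_j$ are preserved intact, and that positivity of the original diagram is inherited. This is the diagram-level analogue of the forgetful map $PB_n \to PB_2$, and the positivity hypothesis is essential: without it the extracted $2$-braid could contain $\sigma_1^{-1}$ factors that cancel with $\sigma_1$'s and destroy the direct count of over-crossings.
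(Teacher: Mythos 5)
Your proof is correct and is essentially the paper's argument: the paper likewise observes that a pair of strands in a positive pure braid meets in an even number of crossings and that the two strands swap positions (hence alternate over/under roles) at every positive crossing, so each is over the other equally often. Your extraction of the $2$-strand subdiagram $\sigma_1^{2k}$ and the preliminary reduction to $\pi=id$ just make this same reasoning more formal.
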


\begin{proof}
Let $B$ be a positive braid diagram of $\beta$. 
Since $B$ is a pure braid, each pair of strands $s_i$ and $s_j$ has an even number of mutual crossings, and the positions of $s_i$ and $s_j$ are switched at every positive crossing. 
Hence the numbers of over crossings of $s_i$ and $s_j$ are same, and therefore the corresponding $(k,l)$- and $(l,k)$-components of $M_B(\pi)$ have the same value. 
\end{proof}
\medskip 

\noindent As for the warping degree, we have the following corollary from Proposition \ref{lem-symm}.

\medskip 
\begin{cor}
For each positive pure braid diagram $B$, the value of $f_B(\pi)$ does not depend on $\pi$. 
\label{cor-ppb}
\end{cor}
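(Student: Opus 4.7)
The plan is to leverage Proposition \ref{lem-symm} directly: since $M_B(\pi)$ is symmetric whenever $B$ is a positive pure braid diagram, the sum of the entries strictly below the main diagonal equals the sum of the entries strictly above it. Combining this with the fact that the diagonal entries are zero, one obtains
$$f_B(\pi) = \sum_{1 \leq j < i \leq n} m_{ij} = \frac{1}{2}\sum_{\substack{1 \leq i,j \leq n \\ i \neq j}} m_{ij} = \frac{1}{2}\sum_{1 \leq i,j \leq n} m_{ij}.$$

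Next I would identify the total sum of entries of $M_B(\pi)$ with a geometric quantity that is manifestly permutation-independent, namely the number of crossings $c(B)$ of the diagram $B$. The key observation is that each crossing of $B$ contributes exactly $1$ to exactly one off-diagonal entry of $M_B(\pi)$: if the crossing has the $\pi(i)$th strand passing over the $\pi(j)$th strand, it contributes only to $m_{ij}$. Hence $\sum_{i,j} m_{ij} = c(B)$ for every $\pi$, and consequently
$$f_B(\pi) = \frac{c(B)}{2}.$$

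Since $c(B)$ depends only on the diagram $B$ and not on the strand permutation $\pi$, this proves the corollary. There is essentially no obstacle: the symmetry supplied by Proposition \ref{lem-symm} reduces the computation of $f_B(\pi)$ to a combinatorial count of crossings, which is intrinsic to $B$. The only point worth emphasizing is that $c(B)$ is even for a positive pure braid diagram, which is automatic from the proof of Proposition \ref{lem-symm} (each pair of strands contributes an even number of crossings).
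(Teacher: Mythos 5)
Your argument is correct and follows exactly the route the paper intends: the corollary is stated as an immediate consequence of Proposition \ref{lem-symm}, and your observation that symmetry forces $f_B(\pi)=\frac{1}{2}\sum_{i,j}m_{ij}=\frac{c(B)}{2}$, with $c(B)$ the (permutation-independent, even) crossing number, is the natural way to fill in that step. Nothing is missing.
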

\medskip

\section*{Acknowledgment}

The first author's work was partially supported by JSPS KAKENHI Grant Number JP21K03263. 
The second author's work was partially supported by JSPS KAKENHI Grant Number JP19K03508.

\end{document}